\renewcommand{\paragraph}[1]{\par\medskip\noindent\textbf{#1}\ }
\def\Re{\mathbb R}
\providecommand{\remove}[1]{}
\theoremstyle{plain}
\newtheorem{theorem}{Theorem}[section]
\newtheorem{proposition}[theorem]{Proposition}
\newtheorem{claim}[theorem]{Claim}
\newtheorem{observation}[theorem]{Observation}
\theoremstyle{definition}
\newtheorem{definition}[theorem]{Definition}
\theoremstyle{remark}
\newcommand{\A}{\mathcal{A}}
\newcommand{\D}{\mathcal{D}}
\renewcommand{\ddagger}{{\star}}
\authorrunning{}
\keywords{convexity, unions of convex sets, co-finite sets}
\renewcommand*\@fnsymbol[1]{\ensuremath{%
  \ifcase#1\or
    *\or
    \star\or     % <-- זה הסימון של ה-thanks השני (במקום †)
    \ddagger\or
    \mathsection\or
    \mathparagraph\or
    \|\or
    **\or
    \star\star\or
    \ddagger\ddagger
  \else
    \@ctrerr
  \fi}}
\begin{document}
	
	\title{On the Largest Convexity Number of Co-Finite Sets in the Plane}
\author{Chaya Keller\thanks{School of Computer Science, Ariel University, Israel. \texttt{chayak@ariel.ac.il}. Research partially supported by the Israel Science Foundation (grant no. 1065/20).}
	\mbox{ }
	and Micha A. Perles\thanks{Einstein Institute of Mathematics, Hebrew University, Jerusalem, Israel.
		\texttt{micha.perles@mail.huji.ac.il}}
}

	\maketitle

	\begin{abstract}
		The convexity number of a set $X \subset \mathbb{R}^2$ is the minimum number of convex subsets required to cover it. We study the following question: what is the largest possible convexity number $f(n)$ of $\mathbb{R}^2 \setminus S$, where $S$ is a set of $n$ points in general position in the plane?
        We prove that for all $n \geq 4$, $\lfloor\frac{n+5}{2}\rfloor \leq f(n) \leq \frac{7n+44}{11}$. We also show that for every $n \geq 4$, if the points of $S$ are in convex position then the  convexity number of $\Re^2 \setminus S$ is $\lfloor\frac{n+5}{2}\rfloor$. This solves a problem of Lawrence and Morris [Finite sets as complements of finite unions of convex sets, Disc. Comput. Geom. 42 (2009), 206-218].
	\end{abstract}

    \section{Introduction}

The \emph{convexity number} of a set $X \subset \mathbb{R}^d$, denoted by $\gamma(X)$, is the minimum number of convex sets required to cover it. In 1957, Valentine~\cite{Valentine57} showed that if among any three points in a closed set $X \subset \mathbb{R}^2$ there are two points that see each other through $X$ (i.e., the interval connecting them is included in $X$), then $\gamma(X) \leq 3$. Following this work, numerous papers (e.g.,~\cite{BreenK76,Eggleston74,NitzanP13,PerlesS90}) studied the question of bounding the convexity number of a set $X \subset \mathbb{R}^2$ in terms of its \emph{invisibility number} $\omega(X)$, defined as the largest possible size of a set $Y$ of points of $X$ that don't see each other through $X$.
\footnote{The term \emph{$m$-convexity} coined by Valentine~\cite{Valentine57} is closely related to the \emph{invisibility number}: a set $X$ is $m$-convex if and only if its invisibility number is $< m$.}
%The notion of \emph{invisibility number} coincides with the notion of \emph{$m$-convexity}: A set is called $m$-convex if its invisibility number is $m-1$.} 
For closed sets, a series of bounds was obtained, culminating with the bound $\gamma(X) \leq 18 \omega(X)^3$ proved by Matou\v{s}ek and Valtr~\cite{MatousekV99}. For general sets, an easy example shows that $\gamma$ is not bounded in terms of $\omega$: If $X$ is obtained from the unit disc by removing the vertices of a regular $n$-gon concentric with the disc and placed close to its boundary, 
then $\omega(X)=3$ while it is easy to show that $\gamma(X) \geq \lceil \frac{n}{2} \rceil +1$~\cite{MatousekV99}.\footnote{We note that in~\cite{CibulkaKKMRV17} it was claimed that in this case, $\gamma(X) = \lceil \frac{n}{2} \rceil +1$; the result was attributed there to~\cite{MatousekV99}. As we wrote in the text, the claim proved in~\cite{MatousekV99} is $\gamma(X) \geq \lceil \frac{n}{2} \rceil +1$. The exact value is $\gamma(X) = \lfloor \frac{n+5}{2} \rfloor - \delta(n)$ where $\delta(n)=1$ for $n=0,1,3$ and $\delta(n)=0$ otherwise, as follows from Theorem~\ref{thm:main}.} 

Motivated by this example, Matou\v{s}ek and Valtr suggested to use the number of isolated points in $\mathbb{R}^2 \setminus X$, which they denoted by $\lambda(X)$, to bound $\gamma(X)$. They showed that $\gamma(X) \leq \omega(X)^4+\lambda(X)\omega(X)^2$, and that this bound is sharp up to a multiplicative factor of $\omega(X)$. In addition, they raised the conjecture that in the plane, $\gamma(X)$ can be bounded in terms of another parameter -- $\chi(X)$, defined as the \emph{chromatic number} of the \emph{invisibility graph} $G_X$ of $X$. This is the graph whose vertex set is $X$ and whose edges connect points of $X$ that do not see each other via $X$. (Clearly, $\omega(X)$ is the \emph{clique number} of $G_X$ and thus $\omega(X) \leq \chi(X) \leq \gamma(X)$ for every $X$).

In~\cite{LawrenceM09}, Lawrence and Morris initiated the study of the convexity number of co-finite sets in $\mathbb{R}^d$ -- namely, sets of the form $X=\mathbb{R}^d \setminus P$, where $P$ is a finite set. Obviously, for such sets we have $\lambda(X)=|P|$, as all points in the complement of $X$ are isolated. The authors of~\cite{LawrenceM09} focused on \emph{lower bounds} on $\gamma(X)$ in terms of $|P|=\lambda(X)$, which they formulated as upper bounds on $|P|$ in terms of $\gamma(X)$. In the same direction, they showed that in the plane, $|P|$ is bounded from above even in terms of $\chi(X)$, and consequently, by the aforementioned result of~\cite{MatousekV99}, for such sets, $\gamma(X)$ is bounded in terms of $\chi(X)$ as was conjectured in~\cite{MatousekV99}. Cibulka et al.~\cite{CibulkaKKMRV17} generalized the bounds of Lawrence and Morris, proving the conjecture of~\cite{MatousekV99} for any $X \subset \mathbb{R}^2$. Very recently, Keller and Perles~\cite{KellerP25} expanded the study initiated by Lawrence and Morris, obtaining a series of structural results on co-finite sets in $\mathbb{R}^d$.    

\medskip

In this paper, we follow up on the study of co-finite sets in $\mathbb{R}^2$ initiated by Lawrence and Morris~\cite{LawrenceM09}, but we  concentrate on the other end of the spectrum -- \emph{upper bounds} on $\gamma(X)$ in terms of $|P|$. We
study several variants of the problem. Regarding the set $P$, besides the `standard' setting in which $P$ is a set of $n$ points in general position in the plane, we study the \emph{convex} setting in which the points of $P$ are assumed to be in convex position.  
This question was explicitly asked by Lawrence and Morris~\cite[Problem~5]{LawrenceM09}. Regarding the convexity number, besides the `standard' setting we study the \emph{disjoint} setting in which the convex sets covering $\mathbb{R}^2 \setminus P$ have to be pairwise disjoint. This variant is motivated by the relation of our problem to Helly-type theorems for unions of convex sets described by Matou\v{s}ek~\cite[Section~2]{Matousek97}, as in such Helly-type theorems, the `disjoint' setting is probably the more natural one (see~\cite{Amenta94,GrunbaumM61}). Finally, regarding the covered area, besides the `standard' setting where the whole complement $\mathbb{R}^2 \setminus P$ must be covered, we study the \emph{encapsulation} setting where it is sufficient to cover a pointed neighborhood of each point in $P$ (see Definition \ref{def:star}). This setting lends itself more easily to inductive proofs, and was studied in~\cite{KellerP25}. 

Combinations of these settings make up eight problems. To define them formally, we use the following notations. For a finite set $P$ of points in a general position in the plane, let: 
\begin{itemize}
	\item $cov(P)$= The smallest number of convex subsets that cover $\Re^2 \setminus P$.
		\item $cov_{\circ}(P)$= The smallest number of pairwise disjoint convex subsets that cover $\Re^2 \setminus P$.
			\item $enc(P)$= The smallest number of convex subsets that encapsulate $P$.
				\item $enc_{\circ}(P)$= The smallest number of pairwise disjoint convex subsets that encapsulate $P$.
\end{itemize}
For $n \in \mathbb{N}$, we define $cov(n)=\max_{P:|P|=n}(cov(P))$. The notations  $cov_{\circ}(n)$, $enc(n)$ and $enc_{\circ}(n)$ are defined similarly. %For the setting where $P$ is a set of points in convex position in the plane, we use the notation 
In the setting where the set $P$ is in convex position we add superscript $c$, e.g.,
\[
cov^c(n)=\max_{P:|P|=n, P \mbox{ is in convex position}}(cov(P)).
\]
%The notations $cov_{\circ}^c(n)$, $enc^c(n)$ and $enc_{\circ}^c(n)$ are defined similarly. 
%We aim at determining the eight quantities 
%$cov(n), cov_{\circ}(n), enc(n), enc_{\circ}(n), cov^c(n), cov_{\circ}^c(n), enc^c(n),$ and $enc_{\circ}^c(n)$ as a function of $n$.

\medskip We obtain the following results.

\begin{theorem}
\label{thm:main}
    In the above notation, the following holds:
    \begin{enumerate}
        \item For any set $P$ of $n$ points in convex position in the plane, 
        \[
        enc(P)=cov(P)=\left\lfloor\frac{n+5}{2} \right\rfloor- \delta(n),
        \]
        where $\delta(n)=1$ for $n=0,1,3$ and $\delta(n)=0$ otherwise, and 
        \[
        enc_{\circ}(P)=cov_{\circ}(P)=\left\lfloor\frac{2n+5}{3} \right\rfloor.
        \]
        Consequently, $enc^c(n)=cov^c(n)=\lfloor\frac{n+5}{2}\rfloor- \delta(n)$, and $enc^c_{\circ}(n)=cov^c_{\circ}(n)=\lfloor\frac{2n+5}{3}\rfloor$.

        \item For any set $P$ of $n$ points in general position in the plane, 
        \[
        enc(P),cov(P) \leq \frac{7n}{11}+4, \qquad \mbox{and} \qquad enc_{\circ}(P),cov_{\circ}(P) \leq \left\lfloor\frac{2n+5}{3} \right\rfloor.
        \]
        By the first part of the theorem, this implies 
        \[
        \left\lfloor\frac{n+5}{2} \right\rfloor- \delta(n) \leq
        enc(n),cov(n) \leq \frac{7n}{11}+4,
        \]
        where $\delta(n)=1$ for $n=0,1,3$ and $\delta(n)=0$ otherwise, and 
        \[
        enc_{\circ}(n)=cov_{\circ}(n)=\left\lfloor\frac{2n+5}{3} \right\rfloor.
        \]
%
%        \item $enc^c(n)=cov^c(n)=\lfloor\frac{n+5}{2}\rfloor- \delta(n)$;
%
%        \item $enc_{\circ}^c(n) =enc_{\circ}(n) =cov_{\circ}^c(n) =cov_{\circ}(n)= \lfloor\frac{2n+5}{3}\rfloor$, 
    \end{enumerate}
%where $\delta(n)=1$ for $n=0,1,3$ and $\delta(n)=0$ otherwise.    
\end{theorem}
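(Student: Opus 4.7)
The plan is to prove Part 1 first, since its conclusions feed directly into Part 2: the convex-position lower bounds of Part 1 yield the lower bound $\lfloor(n+5)/2\rfloor-\delta(n)$ for the supremum over all general-position configurations, and the exact disjoint value generalizes to general position via Part 2's upper-bound argument. Within each part I would treat the non-disjoint and disjoint variants in parallel, noting that $enc(P)\leq cov(P)$ and $enc_\circ(P)\leq cov_\circ(P)$ hold trivially; so the Part 1 equalities reduce to upper bounds on $cov$ and $cov_\circ$ together with matching lower bounds on $enc$ and $enc_\circ$.

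For the Part 1 upper bounds I would proceed by explicit construction. In the non-disjoint case, pair consecutive hull vertices $(p_{2i-1},p_{2i})$ and, for each pair, form a convex set by taking the open exterior half-plane of the chord through $p_{2i-1},p_{2i}$ together with the open segment $(p_{2i-1},p_{2i})$; this set is convex and disjoint from $P$, as one checks directly. A careful arrangement of $\lfloor n/2\rfloor$ such ``pair sets'' handles the exterior of $\text{conv}(P)$ and selected hull edges, and a small constant number of additional sets (including $\text{conv}(P)\setminus P$, which is easily seen to be convex) covers the interior and the remaining features, yielding $\lfloor(n+5)/2\rfloor-\delta(n)$ sets. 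For the disjoint case, the target $\lfloor(2n+5)/3\rfloor$ suggests grouping vertices into consecutive triples and handling each triple with roughly two disjoint convex pieces, for instance cones emanating from an interior basepoint and carefully split along the rays through the vertices.

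For the Part 1 lower bounds I would use an angular/wedge argument at each $p_i\in P$. Any $C\in\F$ that contributes to encapsulating $p_i$ is, locally near $p_i$, contained in an open half-plane through $p_i$ (since $p_i\notin C$ and $C$ is convex), and thus provides a ``wedge'' at $p_i$ of angular measure at most $\pi$; hence at each $p_i$ at least two sets of $\F$ must contribute. The key quantitative step is to bound how many wedges a single $C\in\F$ can provide across different points of $P$: for $P$ in convex position, the points of $P$ on $\partial\bar C$ themselves form a convex chain, and convexity of $\bar C$ strongly constrains the angular data at these boundary points; in the disjoint variant, pairwise disjointness forces additional angular gaps between wedges at the same $p_i$ provided by different sets. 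A careful double-count of (set, point, wedge) triples then yields the stated lower bounds $\lfloor(n+5)/2\rfloor-\delta(n)$ and $\lfloor(2n+5)/3\rfloor$.

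For Part 2, the disjoint upper bound $\lfloor(2n+5)/3\rfloor$ should follow by induction on $n$, peeling off a convex-hull vertex together with one or two neighbors and locally applying a convex-position-style construction in a way that preserves the pairwise disjointness. For the non-disjoint bound $7n/11+4$, I expect a similar inductive scheme: identify a small constant-size local configuration that can be covered by $7/11$ as many convex sets while remaining compatible with an optimal cover of the rest, with the ratio $7/11$ reflecting the worst-case tradeoff and the constant $+4$ absorbing the base cases. The main obstacle I anticipate is extracting the exact constants in Part 1's lower bound --- particularly the correction $\delta(n)$ at small $n$ --- which requires a delicate combinatorial matching between the wedge-direction constraints and the upper-bound construction; the constant $7/11$ in Part 2 will similarly demand a careful structural case analysis to isolate the right local configurations to peel off.
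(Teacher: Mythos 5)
Your high-level architecture agrees with the paper's: everything reduces to an upper bound on $cov$ in convex position, a matching lower bound on $enc$ in convex position, the disjoint bounds, and the $7n/11$ bound, chained through $enc\le cov$, $enc_\circ \le cov_\circ$, and the fact that convex position is a special case of general position; your treatment of the disjoint upper bound (peel off consecutive hull vertices and induct, two extra sets per three points removed) is essentially the paper's Proposition~\ref{cl:g_1UB}. However, two of the four key ingredients are sketched with arguments that would fail. First, the construction for $cov^c(n)\le\lfloor\frac{n+5}{2}\rfloor-\delta(n)$: you pair \emph{consecutive} hull vertices and take, for each pair, the exterior open half-plane of that hull edge plus the open edge. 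For a regular $n$-gon, a point just outside the midpoint of any \emph{unused} edge lies on the polygon's side of every used edge-line, so the union of your $\lfloor n/2\rfloor$ half-planes leaves an uncovered exterior pocket next to each of the $\lceil n/2\rceil$ unused edges. A single convex set avoiding $P$ can absorb at most two such pockets (containing points from three of them forces the set to contain a vertex of $P$), so roughly $n/4$ further sets are needed and the total is about $3n/4$, not $n/2+O(1)$. The paper instead pairs nearly \emph{antipodal} vertices $a_i,b_i$ and uses not single half-planes but the telescoping intersections $H_0^+\cap\cdots\cap H_{i-1}^+\cap H_i^-$ of half-planes bounded by the long diagonals $\ell(a_i,b_i)$; these strips cover everything except $\mbox{int}(\mbox{conv}P)$, which is added as one final convex set.

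Second, the lower bound $enc^c(n)\ge\lfloor\frac{n+5}{2}\rfloor-\delta(n)$. Your wedge double-count needs an $O(1)$ bound on the number of points of $P$ at which a single convex set can contribute a wedge, but no such bound exists: $\mbox{int}(\mbox{conv}P)$ touches all $n$ points, and a thin convex set hugging the outside of a long boundary arc touches many consecutive vertices and contributes an angle close to $\pi$ at each of them. So ``incidences $\ge 2n$, at most a constant per set'' is false, and the total-angle variant degenerates for the same reason. The paper circumvents this with a dichotomy that your sketch does not contain. If some point $x\in\mbox{int}(\mbox{conv}P)$ lies in two of the sets, one places a witness point $\beta_i$ just beyond each $a_i$ on the ray from $x$ through $a_i$ and shows that no convex set avoiding $P$ can contain three of the $\beta_i$'s (it would contain some $a_j$); the pigeonhole then gives $\lceil n/2\rceil$ sets, none of which contains $x$, plus the two that do. If no interior point is doubly covered, the sets are essentially disjoint inside $\mbox{conv}P$ and an induction on $n$ using the ``span'' (shortest boundary arc containing all points a set touches) finishes the argument, with explicit case analyses at $n=4$ and $n=5$ supplying both the base cases and the $\delta(n)$ correction. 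Neither step is recoverable from the angular double-count as you describe it, so the convex-position lower bound, and with it the exact values in Part~1, remain unproved in your outline.
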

In particular, this fully resolves the \emph{disjoint} setting and the \emph{convex} setting of the problem, the latter resolving the problem of Lawrence and Morris~\cite{LawrenceM09}.  

We note that while in our extremal results there is no difference between the `covering' setting and the `encapsulation' setting, these two notions can differ very significantly for specific sets of points.  
Consider, for example, the set 
\[
P=\{  (m,n): m,n \in \mathbb{N}, 1 \leq m,n \leq K, \mbox{ and not both }m \mbox{ and }n \mbox{ are even} \},
\] where $K$ is a large integer. We have $cov(P) \geq (\frac{K}{2})^2$ since no two distinct points of the type $(2 \ell, 2k)$ ($0 \leq k,\ell \leq \lfloor \frac{K+1}{2} \rfloor$) see each other via $\Re^2 \setminus P$. Indeed, some point of $P$ lies in the segment connecting any two such points.
On the other hand, $enc(P) \leq 2K+2$. A corresponding cover of $\Re^2 \setminus P$ consists of the vertical and the horizontal strips 
\begin{align*}
\{(i,i+1) \times \Re \}&_{1 \leq i \leq K-1} \bigcup \{ \Re \times(i,i+1)  \}_{1 \leq i \leq K-1} \\
&\cup \{(-\infty,1) \times \Re \} \cup \{(K,\infty) \times \Re \} \cup \{\Re \times(-\infty,1)\} \cup \{ \Re \times(K,\infty)\}.
\end{align*}

The way in which we obtain the results is demonstrated in Figure~\ref{fig:fig14}. The trivial relations stating that the convex variant of each parameter is no larger the general variant, the encapsulation variant is no larger than the covering variant, and the disjoint variant is no smaller than the corresponding general variant, are depicted by arrows, where the relation $\leq$ leads from the tail of each arrow to its head. 
%This figure also demonstrates our main results. 
Our results are shown in the figure, and the assertion of Theorem~\ref{thm:main} follows from them using the trivial relations encoded by the arrows.

The remaining open problem is to determine $enc(n)$ and $cov(n)$; 
%and in particular, to understand whether there is a difference between the convex setting and the `general position' setting. In fact, we could not find a set $P$ of $n$ points in general position in the plane whose convexity number is $> \lfloor \frac{n+5}{2} \rfloor - \delta(n)$. 
the gap between the lower and upper bounds we obtained for them is significant.

\begin{figure}[ht]
	\begin{center}
		\scalebox{1.1}{
			\includegraphics[width=0.8\textwidth]{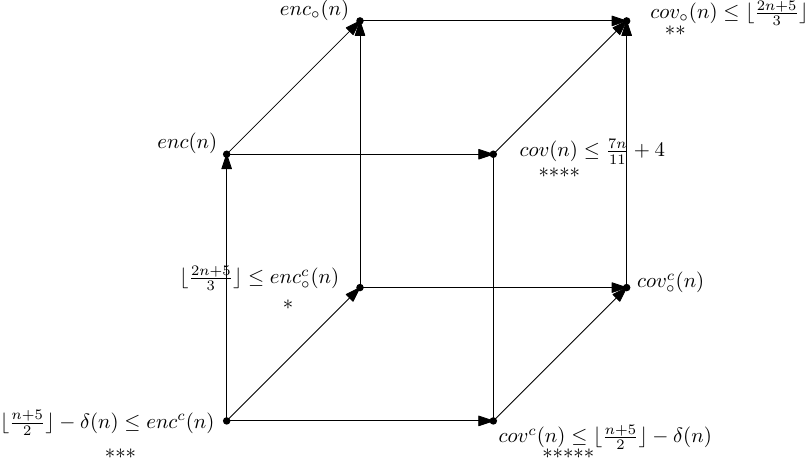}
		}
\caption{A diagram of our results.
%This figure demonstrates 
%the relations between the 8 parameters defined in the introduction.
%the results of the paper.
Each arrow represents a trivial `$\leq$' relation, where the parameter that corresponds to the tail of the arrow is smaller than or equal to the parameter the corresponds to the head of the arrow. 
%Moreover, the figure presents a diagram of our results. 
Thm *=Proposition \ref{cl:tightnesUB}, Thm**=Proposition \ref{cl:g_1UB}, Thm***=Theorem \ref{thm:encaps_comp_of_n}, Thm****=Theorem \ref{thm:cov(n)UB}, Thm*****=Theorem \ref{thm:cover_comp_of_n}.
%        In particular, it follows that $enc_{\circ}^c(n)=cov_{\circ}(n)=cov_{\circ}^c(n)=enc_{\circ}(n)=\lfloor\frac{2n+5}{3}\rfloor$ and $enc^c(n)=cov^c(n)=\lfloor\frac{n+5}{2}\rfloor- \delta(n).$ There remains still the gap $\lfloor\frac{n+5}{2}\rfloor- \delta(n) \leq cov(n) \leq \frac{7n}{11}+4$.}
The assertions of Theorem~\ref{thm:main} follow from these results via the `arrow' relations.
}
		\label{fig:fig14}
	\end{center}
\end{figure}

\medskip

The rest of the paper is organized as follows. The encapsulation lower bounds (Theorem \ref{thm:encaps_comp_of_n} and Proposition \ref{cl:tightnesUB} in Figure \ref{fig:fig14}) are presented in Section \ref{sec:enc}. The covering upper bounds (Theorem \ref{thm:cover_comp_of_n}, Theorem \ref{thm:cov(n)UB} and Proposition \ref{cl:g_1UB} in Figure \ref{fig:fig14}) are presented in Section \ref{sec:cov}.

\section{Preliminaries}\label{sec:preliminaries}

For a set $S \subset \Re^d$, denote by $\mbox{cl}(S)$ and $\mbox{int}(S)$ the topological closure and interior of $S$, respectively. The convex hull of $S$ is denoted by $\mbox{conv}(S)$. 

\begin{definition}\label{def:star}
A point $p \in \Re^2$ is encapsulated by the sets $K_1,\ldots,K_n$ if there exists some neighborhood $N$ of $p$ such that $(\bigcup_{i=1}^n K_i )\cap N = N \setminus\{p\}$.
\end{definition}

\begin{definition}\label{def:touch}
	For $K \subset \Re^d, p \in \Re^d$, we say that $p$ touches $K$ (or $K$ touches $p$), if $p \in \mbox{cl}(K) \setminus K$.
\end{definition}
Given a finite set $P$ of points in $\Re^2$ and a set  $K \subset \Re^d$, let $touch_P(K)=\{p \in P : p \mbox{ touches } K\}$. In cases where the dependence on $P$ is clear from the context we simply write $touch(K)$.

\begin{observation}\label{obs:prem}
Let $P$ be a set of points that are encapsulated by $t$ convex sets. If $|P|=1$ then $t \geq 2$ and if $|P|=2$ then $t \geq 3$.
\end{observation}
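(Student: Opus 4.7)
The plan is to base both statements on one elementary geometric fact: if $K\subset\Re^2$ is convex and $p\in\mbox{cl}(K)\setminus K$, the supporting-line theorem produces a line $\ell$ through $p$ with $K$ contained in a single closed half-plane bounded by $\ell$. Consequently, the set of ``approach directions''
\[
A(K,p) := \{v\in S^1 : p+tv\in K\text{ for arbitrarily small }t>0\}
\]
lies in a closed half-circle, so it has angular measure at most $\pi$. Whenever $K_1,\dots,K_t$ encapsulate a point $p$, the arcs $A(K_i,p)$ must together cover $S^1$ (of measure $2\pi$). This immediately yields $t\ge 2$ in the $|P|=1$ case, since a single arc of measure $\le\pi$ cannot cover the full circle.

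For $|P|=2$ with $P=\{p_1,p_2\}$, I would assume for contradiction that two convex sets $K_1,K_2$ encapsulate $P$. First I would verify that each $p_j$ lies in both $\mbox{cl}(K_1)$ and $\mbox{cl}(K_2)$; otherwise one of the $K_i$ is locally irrelevant near some $p_j$, and the other set encapsulates $p_j$ alone, contradicting the $|P|=1$ case. Next, at $p_1$ the arcs $A(K_1,p_1)$ and $A(K_2,p_1)$ cover $S^1$ while each has measure $\le\pi$; hence each has measure exactly $\pi$, and the two are complementary closed half-circles. This forces $K_1$ and $K_2$ to share a common supporting line $\ell_{p_1}$ at $p_1$ and lie in opposite closed half-planes with respect to it. Since $p_2\in\mbox{cl}(K_1)\cap\mbox{cl}(K_2)$ must then lie in the intersection of these half-planes -- which is $\ell_{p_1}$ itself -- we conclude $p_2\in\ell_{p_1}$, so $\ell_{p_1}$ equals the line $L$ through $p_1$ and $p_2$. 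The same argument at $p_2$ shows that $L$ is the common supporting line there as well, and from this I obtain global inclusions $K_1\subseteq\overline{H^+}$, $K_2\subseteq\overline{H^-}$, where $H^\pm$ are the two open half-planes bounded by $L$.

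To finish, I would parameterize $L$ so that $p_1=(0,0)$ and $p_2=(1,0)$, and examine which $K_i$ contains each of the four on-line points $(\pm\epsilon,0)$ and $(1\pm\epsilon,0)$ for small $\epsilon>0$. Each such point lies in a pointed neighborhood of $p_1$ or $p_2$, so must lie in $K_1\cup K_2$. Since each $K_i\cap L$ is an interval avoiding $\{p_1,p_2\}$, no $K_i$ can contain points immediately on both sides of $p_1$ (that would force $p_1\in K_i$ by convexity), and similarly at $p_2$. A short case analysis on the remaining assignments shows that every consistent choice forces some $K_i\cap L$ to be an interval straddling $p_1$ or $p_2$, contradicting $p_j\notin K_i$.

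The main obstacle is the middle step: turning the local angular condition at $p_1$ into a \emph{global} containment of each $K_i$ in one closed half-plane bounded by $L$, and matching the half-plane obtained at $p_1$ with the one obtained at $p_2$ so that a single convex set lies on a consistent side of $L$. Once this global half-plane structure is established, the final contradiction along $L$ is a routine case check.
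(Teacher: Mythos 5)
Your proof is correct. Note, however, that the paper offers no proof at all here: the statement is labelled an Observation and treated as immediate, and the argument it implicitly relies on is essentially the last step of your own proof, applied directly. Indeed, let $L=\ell(p_1,p_2)$. Since each $p_j$ is encapsulated, no $K_i$ contains $p_1$ or $p_2$, so each $K_i\cap L$ is a convex subset of $L$ avoiding $\{p_1,p_2\}$ and hence lies in a single component of $L\setminus\{p_1,p_2\}$. The punctured neighborhoods of $p_1$ and $p_2$ meet all three components of $L\setminus\{p_1,p_2\}$ (just outside $p_1$, strictly between $p_1$ and $p_2$, and just outside $p_2$), and each such point must lie in some $K_i$; three components therefore force three distinct sets, giving $t\ge 3$, and the same restriction-to-a-line argument with a single point gives $t\ge 2$. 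In other words, the entire supporting-line/angular-measure apparatus you build (the arcs $A(K_i,p)$, the forced common supporting line at $p_1$, the global half-plane containments) is sound but superfluous: it is used only to set up a final configuration on $L$ that can be reached in one line from the definitions. The ``main obstacle'' you flag -- globalizing the local angular condition -- is also not a real obstacle, since the supporting hyperplane theorem already gives a \emph{global} half-plane containing $K_i$; but more to the point, that whole step can be skipped. Your route buys nothing extra here, though the half-plane structure you derive is the kind of information that becomes genuinely useful in the later arguments of the paper (e.g., the touching analysis in Theorem \ref{thm:encaps_comp_of_n}).
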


For $a,b \in \Re^d$ denote by $\ell(a,b)$ the line through $a$ and $b$.

\iffalse
\begin{theorem}\label{thm:comp_of_n}
	$$cov_{\circ}(n) = \lfloor \frac{2n+5}{3} \rfloor,$$
	and a tightness construction is obtained by $n$ points in convex position in the plane.
\end{theorem}
 Theorem \ref{thm:comp_of_n} follows immediately from the combination of Claims \ref{cl:g_1UB},\ref{cl:tightnesUB} below.
 \fi

%\noindent \textbf{Open Problem:} What is the asymptotic behavior of the function $cov(n)$?  
\section{Lower Bounds}\label{sec:enc}

In this section we prove the two encapsulation lower bounds depicted in Figure \ref{fig:fig14}. First we prove Theorem \ref{thm:encaps_comp_of_n}, and then we prove Proposition \ref{cl:tightnesUB}.
\begin{theorem}\label{thm:encaps_comp_of_n}
	\[
    \left\lfloor \frac{n+5}{2} \right\rfloor -\delta(n) \leq enc^c(n) ,
    \]
	where $\delta(0)=\delta(1)=\delta(3)=1 $ and  $\delta(n)=0$ for all $n \neq 0,1,3$. Namely, at least $\lfloor \frac{n+5}{2} \rfloor -\delta(n)$ convex sets are required to encapsulate $n$ points in convex position in the plane.
	%A corresponding construction is obtained by $n$ points in convex position in the plane. The construction demonstrates even a stronger property: $\lfloor \frac{n+5}{2} \rfloor -\delta(n)$ convex sets are sufficient to \emph{cover} the complement of $n$ points in convex position in the plane.
\end{theorem}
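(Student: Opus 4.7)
The plan is to prove the lower bound by induction on $n$, with base cases $n \leq 5$ verified directly and the inductive step reducing from $n$ to $n-2$.

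\medskip

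\noindent\emph{Base cases.} For $n \in \{0, 1, 2\}$, the bound $\lfloor (n+5)/2 \rfloor - \delta(n)$ equals $1, 2, 3$ respectively, matching Observation~\ref{obs:prem}. For $n \in \{3, 4, 5\}$, I argue by contradiction: assume some family of $t < \lfloor (n+5)/2 \rfloor - \delta(n)$ convex sets encapsulates a convex $n$-gon, and use the angular-sector requirement---at each vertex $p_i$, the tangent cones of the sets touching $p_i$ must cover the full circle of directions, and each such cone has angular measure $\leq \pi$ (since $p_i \notin K_j$ is a boundary point of the convex $K_j$). For $n = 3$ and $t = 2$, both sets must touch each of the three vertices with sector exactly $\pi$, forcing their tangent lines at each vertex to coincide with the two sets lying on opposite sides; but then one of these sets would have to be a convex set lying on the ``exterior'' of three distinct tangent lines through the three vertices of a triangle, which is geometrically impossible. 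A similar but more elaborate analysis handles $n = 4, 5$.

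\medskip

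\noindent\emph{Inductive step.} For $n \geq 6$, assume the bound for all smaller values. Given an encapsulating family $\mathcal{K} = \{K_1, \ldots, K_t\}$ of a convex $n$-gon $P$, the goal is to locate a convex set $K_j \in \mathcal{K}$ with $|\mathrm{touch}(K_j)| \leq 2$. With such a ``light'' set in hand, pick two vertices $p_a, p_b$ of $P$ such that $\mathrm{touch}(K_j) \subseteq \{p_a, p_b\}$ (augmenting with an arbitrary additional vertex of $P$ if $|\mathrm{touch}(K_j)| \leq 1$). Since $K_j$ contributes to the encapsulation only at the vertices in $\mathrm{touch}(K_j)$, the family $\mathcal{K} \setminus \{K_j\}$ encapsulates the convex $(n-2)$-gon $P \setminus \{p_a, p_b\}$. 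By the inductive hypothesis, $t - 1 \geq \lfloor ((n-2)+5)/2 \rfloor - \delta(n-2) = \lfloor (n+3)/2 \rfloor$ (using $\delta(n-2) = 0$ for $n \geq 6$), hence $t \geq \lfloor (n+5)/2 \rfloor - \delta(n)$.

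\medskip

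\noindent\emph{Main obstacle.} The crux is proving existence of a light set. Basic double counting gives $\sum_{j=1}^t |\mathrm{touch}(K_j)| \geq 2n$, which alone permits every set to satisfy $|\mathrm{touch}(K_j)| \geq 3$. To force a light set to exist, the plan is to combine (a) the per-set angular inequality $\sum_{p_i \in \mathrm{touch}(K_j)} \alpha_{i,j} \leq |\mathrm{touch}(K_j)|\pi$ (a consequence of total turning $2\pi$ along the boundary of the bounded convex $K_j$, together with the fact that each sector is $\leq \pi$), with (b) the structural constraint that if $|\mathrm{touch}(K_j)| \geq 3$ then $\mathrm{cl}(K_j) \supseteq \mathrm{conv}(\mathrm{touch}(K_j))$, a convex subpolygon of $P$, while every other vertex of $P$ lies outside $\mathrm{cl}(K_j)$. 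The delicate interplay between these angular and geometric constraints, refined by the cyclic distribution of touched vertices around $P$, should either force a light set to exist or directly yield the sharper counting needed to conclude.
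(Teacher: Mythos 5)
Your inductive skeleton (remove a ``light'' set touching at most two vertices, drop those two vertices, apply the hypothesis for $n-2$) coincides with one branch of the paper's argument, and the arithmetic of that step is fine. But the proof has a genuine gap exactly where you flag the ``main obstacle'': you never establish that a light set exists, and the tools you propose cannot do so. Your angular inequality, combined with the requirement that the tangent cones at each $p_i$ cover a full $2\pi$ of directions, yields only $\sum_j |\mathrm{touch}(K_j)| \geq 2n$, which is perfectly consistent with every $K_j$ touching three or more points; nothing in your sketch upgrades this. The paper's resolution is a dichotomy you are missing. In Case A, where some point $x \in \mathrm{int}(\mathrm{conv}\,P)$ lies in two members of $\mathcal{K}$, no light set is sought at all: one places a point $\beta_i$ just outside $\mathrm{conv}\,P$ on each ray $\ell(x,a_i)$, shows no convex set of the family can contain three of the $\beta_i$ (else it would contain some $a_j$), and that none of the $\geq \lceil n/2 \rceil$ sets so produced contains $x$, giving $t \geq \lceil n/2\rceil + 2 = \lfloor(n+5)/2\rfloor$ directly. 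Only in Case B, where no interior point of the hull is doubly covered, does one prove that a light set must exist --- via a ``shortest span'' argument: a set with shortest span touches three consecutive vertices, the middle vertex must also touch a second set with at least three touches, and the two sets are then forced to overlap inside $\mathrm{conv}\,P$, contradicting the Case B hypothesis. Without this dichotomy your induction does not get off the ground.

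Two further points. First, your base cases are not actually proved: for $n=4$ you must show three convex sets cannot encapsulate four points in convex position (the paper does this with a pigeonhole argument on the eight outward segment-stubs of the lines $\ell(a_i,a_{i+1})$), and for $n=5$ you must show four sets do not suffice --- in the paper this is the content of Claim~\ref{cl:n=5}, a full page of case analysis that moreover is only needed (and only carried out) under the Case B assumption, with Case A covering $n=5$ separately. Saying ``a similar but more elaborate analysis handles $n=4,5$'' leaves the hardest concrete work undone. Second, the angular bookkeeping in item (a) of your plan assumes the $K_j$ are bounded with total turning $2\pi$; the sets in an optimal encapsulation are typically unbounded (half-planes and intersections of half-planes), so that inequality would need restating even if the counting were strong enough.
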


\begin{proof}
 %First, we prove the inequality in the statement of the theorem. 
 The cases $n=1,2,3$ are trivial. For $n=4$ we shall prove that three convex sets are not sufficient to encapsulate four points $\{a_0,a_1,a_2,a_3\}$ in convex position in the plane.\footnote{We note that a slightly different argument for a similar problem with $n=4$ is given in \cite[Theorem 1]{LawrenceM09}.} Assume to the contrary that three convex sets $A,B,C$ suffice. Consider the eight short segments on $\ell(a_i,a_{i+1})$ (where the indices are taken modulo 4) that emanate outside of $\mbox{conv}(a_0,\ldots,a_3)$ (these short segments are drawn in regular lines in Figure \ref{fig:fig7}).
 \begin{figure}[ht]
 	\begin{center}
 		\scalebox{0.6}{
 			\includegraphics[width=0.8\textwidth]{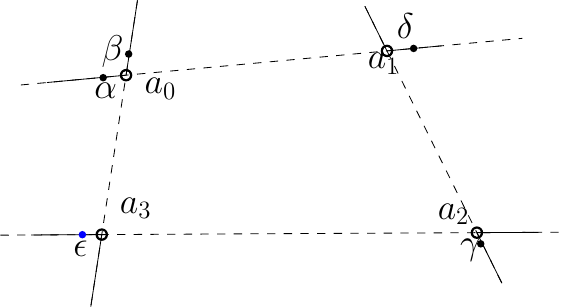}
 		}
 		\caption{An illustration for the proof of Theorem \ref{thm:encaps_comp_of_n}.}
 		\label{fig:fig7}
 	\end{center}
 \end{figure}
By the pigeonhole principle, three of them contain a point of $A$ (w.l.o.g.) very close to the corresponding $a_i$. 
\begin{itemize}
    \item If these three points are $\alpha,\beta$ and $\gamma$ (in the notations of Figure \ref{fig:fig7}), then $a_0 \in \mbox{conv}(\alpha,\beta,\gamma ) \subset A$, a contradiction.
    \item If these 3 points are $\beta,\gamma, \delta$ then $a_1,a_2 \in \mbox{conv}(\beta,\gamma,\delta ) \subset A$, a contradiction.
    \item  If these three points are $\alpha,\beta$ and $\epsilon$ then we need one more step for the contradiction: Consider the line $\ell(a_1,a_2)$. By Observation \ref{obs:prem}, all three sets $A,B,C$ are needed to encapsulate $a_1,a_2$ on this line. Whether $a_1 \in touch (A) $ or $a_2 \in touch(A)$ we have $a_0 \in A$, a contradiction.
    \item All other cases are symmetric or similar.
\end{itemize}

Now we pass to $n \geq 5$. Let $f(n)= \lfloor \frac{n+5}{2} \rfloor$. Let $P$ be a set of $n$ points $\{a_0,\ldots,a_{n-1}\}$ in convex position in the plane. Consider a family $\mathcal{K}$ of convex sets that encapsulate $P$. We shall prove that $|\mathcal{K}| \geq f(n)$. To this end we consider two cases:

\noindent \textbf{Case A:} There exists some $x \in \mbox{int(conv}P)$ that is contained in at least two sets in $\mathcal{K}$.

\noindent \textbf{Case B:} No point in $ \mbox{int(conv}P)$ is contained in two sets in $\mathcal{K}$.

\medskip

We handle each case separately.

\noindent \textbf{Case A:} Consider the short segments on $\ell(x,a_i)$ outside $\mbox{conv}P$ (the regular segments in Figure \ref{fig:fig8}).
 \begin{figure}[ht]
	\begin{center}
		\scalebox{0.6}{
			\includegraphics[width=0.8\textwidth]{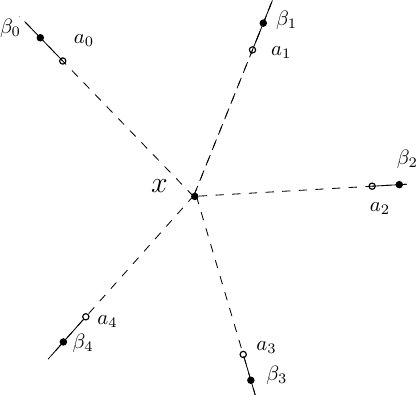}
		}
		\caption{An illustration for the proof of Theorem \ref{thm:encaps_comp_of_n}.}
		\label{fig:fig8}
	\end{center}
\end{figure}
Let $\beta_i$ be a point on such a segment very close to $a_i$. We claim that no three $\beta_i$'s are contained in the same $K \in \mathcal{K}$. Indeed, assume $\beta_i, \beta_j,\beta_k \in K$. If $x,\beta_i, \beta_j$ lie on the same line, then $a_i,a_j \in \mbox{conv}(\beta_i, \beta_j) \subset K$, a contradiction. Otherwise, either all 3 rays from $x$ to $\beta_i, \beta_j,\beta_k$ (in this cyclic order) are contained in a half-plane or not. In the first case (see Figure \ref{fig:fig9}(a)) $a_j \in \mbox{conv}(\beta_i, \beta_j,\beta_k) \in K$, a contradiction. In the second case (see Figure \ref{fig:fig9}(b)) $a_i,a_j,a_k \in \mbox{conv}(\beta_i, \beta_j,\beta_k) \in K$, a contradiction.
 \begin{figure}[ht]
	\begin{center}
		\scalebox{0.8}{
			\includegraphics[width=0.8\textwidth]{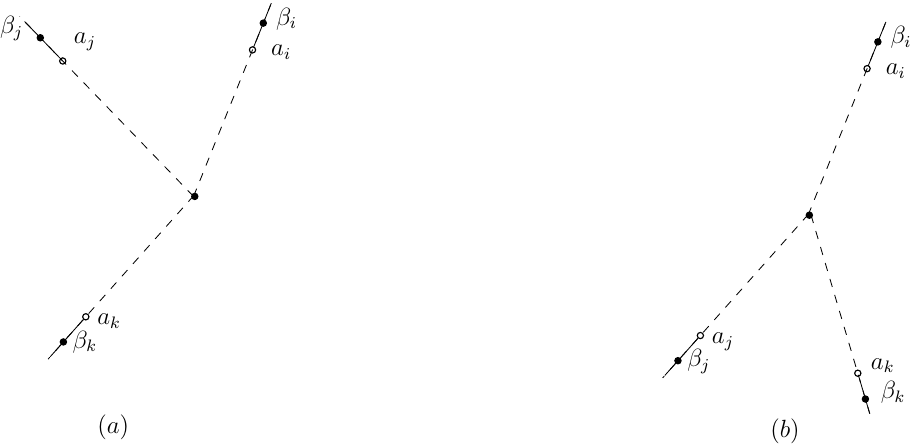}
		}
		\caption{An illustration for the proof of Theorem \ref{thm:encaps_comp_of_n} -- case A.}
		\label{fig:fig9}
	\end{center}
\end{figure}
Therefore, since no convex set can be used 3 times, by the pigeonhole principle we have used so far at least $ \lceil \frac{n}{2} \rceil$ sets. None of these $ \lceil \frac{n}{2} \rceil$ sets contains $x$ (since otherwise some $a_i$ is contained in $K$). Then $|\mathcal{K}| \geq  \lceil \frac{n}{2} \rceil+2 \geq \lfloor \frac{n+5}{2} \rfloor = f(n)$, as stated.

\medskip

\noindent \textbf{Case B:} Since for $n>5$ we have $f(n-2)=f(n)-1$, and for $i \geq 3$ we have $f(2i)=f(2i-1)$, it is sufficient to prove the assertion only for odd $n$, where the base case $n=5$ will be discussed in Claim \ref{cl:n=5} below. For the induction step, let $n>5$ be an odd integer and assume correctness for $n-2$. 

If some $K \in \mathcal{K}$ touches at most 2 points of $P$, then by the induction hypothesis at least $f(n-2)$ convex sets are needed to encapsulate all other $n-2$ points. Hence $|\mathcal{K}| \geq 1+f(n-2)=f(n)$ (in the right equality we use the assumption $n>5$).

From now on we assume that each $K \in \mathcal{K}$ touches at least three points in $P$. Let a \emph{span} of $K \in \mathcal{K}$ be a shortest arc on the boundary of $\mbox{conv}P$ that contains all points in $touch(K)$. The \emph{length} of the span is the number of points in $P$ it contains. Let $K \in \mathcal{K}$ be a set with a shortest span, $\gamma$. Then $K$ touches all the points in $P \cap \gamma$, since otherwise some inner $p \in \gamma \cap P$ touches another $K' \in \mathcal{K}$ whose span is at least as long. But then $K \cap K' \neq \emptyset$ and
we contradict the assumption of Case B. 

In particular, $K$ touches three consecutive vertices of $\mbox{conv}P$, say $a_1,a_2,a_3$. Since by Observation \ref{obs:prem} no point is encapsulated by a single convex set, $a_2$ touches some other $K'' \in \mathcal{K}$. But since $K''$ touches at least three points in $P$, we have $\mbox{int}(K) \cap  \mbox{int}(K') \cap \mbox{int}(\mbox{conv}P) \neq \emptyset$ in contradiction to the assumption of Case B.

\medskip

To complete the proof of Case B, we have to prove the induction basis for $n=5$. For the sake of convenience this is done in Claim \ref{cl:n=5} below. Up to this induction basis we completed the proof of Theorem \ref{thm:encaps_comp_of_n}.
\end{proof}

\medskip

It remains to prove the induction basis for $n=5$. 
\begin{claim}\label{cl:n=5}
	Let $P=\{a_1,\ldots,a_5\}$ be a set of 5 points in this cyclic order in convex position in the plane, and let $K_1,\ldots,K_t$ be $t$ convex sets that encapsulate $P$, where no point in $\mbox{int} (\mbox{conv}P)$ is contained in two $K_i$'s. Then $t \geq 5$.
\end{claim}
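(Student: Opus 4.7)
The plan is to argue by contradiction: suppose $t \leq 4$. By Observation~\ref{obs:prem}, each of the five points of $P$ is touched by at least two of the $K_i$'s, so $\sum_{i=1}^t |touch_P(K_i)| \geq 10$, and in particular some $K_i$ touches at least three vertices. I would then split into two main cases.

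\textbf{Case I:} \emph{every $K_i$ touches at least three vertices of $P$.} I reproduce essentially the shortest-span argument used in the proof of Theorem~\ref{thm:encaps_comp_of_n} for $n > 5$, which in fact never required $n > 5$ in this sub-case. Taking $K \in \{K_1, \ldots, K_t\}$ with a shortest span, the standard reasoning forces $K$ to touch three consecutive vertices, say $a_1, a_2, a_3$, and then $\mbox{cl}(K) \supset \mbox{conv}(a_1, a_2, a_3)$, so $K$ contains the entire inside-pentagon cone at $a_2$ in its interior. By Observation~\ref{obs:prem} a second set $K''$ touches $a_2$; since $K''$ also touches at least three vertices, either its remaining two touches are exactly $\{a_1, a_3\}$ (so $\mbox{cl}(K'')$ also contains $\mbox{conv}(a_1, a_2, a_3)$) or $K''$ touches some vertex non-adjacent to $a_2$, forcing the tangent cone of $K''$ at $a_2$ to enter the pentagon. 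In either case $\mbox{int}(K) \cap \mbox{int}(K'') \cap \mbox{int}(\mbox{conv}P) \neq \emptyset$, contradicting the hypothesis.

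\textbf{Case II:} \emph{some $K_j$ touches at most two vertices of $P$.} If $|touch_P(K_j)| \leq 1$, then the remaining at most three sets must encapsulate the at least four vertices of $P$ not touched by $K_j$ (these vertices have neighborhoods disjoint from $K_j$, so $K_j$ plays no role in their encapsulation); but these are in convex position, and the $n = 4$ argument at the start of the proof of Theorem~\ref{thm:encaps_comp_of_n} shows that three convex sets cannot encapsulate four points in convex position, a contradiction. The main obstacle is the remaining sub-case $|touch_P(K_j)| = 2$, since removing $K_j$ leaves only three points to encapsulate, which is possible in general. Here I would further split on whether the two touched vertices are adjacent on the pentagon. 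In the non-adjacent case, say $touch_P(K_j) = \{a_1, a_3\}$, the chord $[a_1, a_3]$ lies in $\mbox{cl}(K_j) \cap \mbox{int}(\mbox{conv}P)$, so the disjointness hypothesis forbids any other $K_i$ from meeting the interior of this chord, heavily restricting the remaining three sets inside $\mbox{conv}P$. In the adjacent case, say $touch_P(K_j) = \{a_1, a_2\}$, I would combine a pigeonhole argument on the short outside edge-extensions at $a_1, a_2$ (analogous to the $n = 4$ argument) with the constraint that at each of $a_3, a_4, a_5$ the inside cone is partitioned by the tangent cones of the remaining three sets, to conclude that either some inside cone is uncovered or two of these sets overlap in $\mbox{int}(\mbox{conv}P)$. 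Making both of these final sub-arguments go through cleanly is where the main geometric work remains.
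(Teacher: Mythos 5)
Your Case~I and the easy half of Case~II are sound and parallel the paper's reasoning, but the proof has a genuine gap exactly where you admit it: the sub-case in which some $K_j$ satisfies $|touch_P(K_j)|=2$ is only sketched, and neither of your two proposed finishes is carried out. In the non-adjacent sub-case your key step — that no other $K_i$ may meet the relative interior of the chord $[a_1,a_3]$ — does not follow from the hypothesis as stated: the hypothesis forbids a point of $\mbox{int}(\mbox{conv}P)$ from lying in \emph{two} of the sets $K_i$, whereas you only know the chord lies in $\mbox{cl}(K_j)$, not in $K_j$ itself, so another set could in principle pass through it. In the adjacent sub-case you offer no argument at all beyond a plan. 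Since a single convex set touching exactly two points, together with three further sets, is not obviously impossible (three sets \emph{can} encapsulate three points), this sub-case is the crux of the claim and cannot be left open.

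The paper closes this hole with a purely combinatorial observation that you are missing: for \emph{any} two indices $i_1<i_2$ one must have $|touch(K_{i_1})\cup touch(K_{i_2})|>3$, because otherwise at least two of the five points are untouched by both sets, those two points still require at least three convex sets by Observation~\ref{obs:prem}, and the total is already $\ge 5$. This single counting step immediately kills every configuration in which a set touching two points pairs with another small set, and it lets the paper organize the whole proof around the \emph{maximum} of $|touch(K_i)|$ (cases $5$, $4$, $3$) rather than the minimum, with each case reduced to a short contradiction against that observation or against Observation~\ref{obs:prem}. I would suggest you replace your geometric program for the $|touch(K_j)|=2$ sub-case with this counting lemma; as it stands, your argument does not constitute a complete proof.
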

\begin{proof}[Proof of Claim \ref{cl:n=5}]
	Assume to the contrary that $t < 5$. We use several observations:
	\begin{observation}\label{obs:1}
		Each $K_i$ touches at least two $a_j$'s. 
	\end{observation}
Indeed, otherwise there are 4 points that are encapsulated by $t-1$ convex sets. As we proved at the beginning of the proof of Theorem \ref{thm:encaps_comp_of_n}, it follows that $t-1 \geq 4$, a contradisction.
\begin{observation}\label{obs:2}
	Some $K_i$ touches at least 3 of the $a_j$'s.
\end{observation} 
\begin{proof}
By Observation \ref{obs:1} the number of touchings of $K_i$'s and $a_j$'s is at least $2 \times 5=10$. If each $K_i$ touches exactly 2 points then $t=5$, a contradiction. Otherwise, by double counting, the assertion of Observation \ref{obs:2} follows.
\end{proof}
\begin{observation}\label{obs:3}
	For every $1 \leq i_1<i_2 \leq t$, $|touch(K_{i_1}) \cup touch(K_{i_2})|>3$.
\end{observation}
\begin{proof}
	Otherwise, $|P \setminus (touch(K_{i_1}) \cup touch(K_{i_2}))| \geq 2$, hence by Observation \ref{obs:prem}, at least 3 convex sets are needed to encapsulate $P \setminus (touch(K_{i_1}) \cup touch(K_{i_2}))$, and together with $K_{i_1},K_{i_2}$ we have $t \geq 5$, a contradiction.
\end{proof}
\begin{observation}\label{obs:4}
	Each $a_j$ touches some $K_i$ with $|touch(K_i) | \geq 3$.
\end{observation}

	Indeed, by Observation \ref{obs:prem}, $a_j$ touches at least two $K_i$'s, and if both touch at most 2 points, it contradicts Observation \ref{obs:3}. 

\medskip

Now we are ready to continue with the proof of Claim \ref{cl:n=5}. Assume that $|touch(K_1)|=\max\{|touch(K_1)|,\ldots,|touch(K_t)|\}$. By Observation \ref{obs:2}, $|touch(K_1)| \geq 3$, hence we consider 3 cases:

\medskip

\noindent \textbf{Case 1: $|touch(K_1)|=5$.} Then since no two $K_i$'s intersect in $\mbox{int}(\mbox{conv}P)$, by Observation \ref{obs:1} each of $K_2,\ldots,K_t$ touches two consecutive $a_j$'s. Since by Observation~\ref{obs:prem} each $a_j$ touches at least two $K_i$'s, the family $K_2,\ldots,K_t$ contains at least 3 sets, but then two $K_i$'s touch together 3 points, contradicting Observation \ref{obs:3}.

\medskip

\noindent \textbf{Case 2: $|touch(K_1)|=4$.} Assume that $touch(K_1)=\{a_2,a_3,a_4,a_5\}$. A set $K_i$ that touches $a_1$ cannot touch $a_3$ or $a_4$ since no two convex sets intersect in $\mbox{int}(\mbox{conv}P)$. By Observation \ref{obs:prem} at least 3 convex sets are needed to encapsulate $a_3,a_4$. At least 2 other covex sets are needed to encapsulate $a_1$, thus in total $t \geq 5$, a contradiction.

\medskip

\noindent \textbf{Case 3: $|touch(K_1)|=3$.} If $touch(K_1)$ contains 3 consecutive points, say $a_1,a_2,a_3$ then by Observation \ref{obs:prem} $a_2$ touches another set, say $K_2$. Since $K_1$ and $K_2$ do not intersect in $\mbox{int}(\mbox{conv}P)$, either $touch(K_2)=\{a_1,a_2\}$ or $touch(K_2)=\{a_2,a_3\}$. Then $|touch(K_{1}) \cup touch(K_{2})|=3$ contradicting Observation \ref{obs:3}.
The remaining setting of case 3 is where $touch(K_1)$ contains 3 non-consecutive points, say $a_1,a_3,a_4$, and we can also assume that each other $K_i$ with $|touch(K_i)|=3$ touches 3 non-consecutive points (otherwise just replace the corresponding set with $K_1$). Then by Observation \ref{obs:4}, $a_2$ touches some $K_i$ with $|touch(K_i)|=3$, but then $K_1$ and $K_i$ intersect in $\mbox{int}(\mbox{conv}P)$, a contradiction.
This completes the proof of Claim \ref{cl:n=5}.
\end{proof}

\medskip

Now we turn to the second result of this section.

\begin{proposition}\label{cl:tightnesUB}
	$enc_{\circ}^c(n) \geq \lfloor \frac{2n+5}{3} \rfloor$. In other words, if a set $P=\{p_1,\ldots,p_n\} \subset \Re^2$ of $n$ points in convex position is encapsulated by the pairwise disjoint convex sets $\mathcal{K}=\{K_1,\ldots,K_t\}$, then $t \geq \lfloor \frac{2n+5}{3} \rfloor$.
\end{proposition}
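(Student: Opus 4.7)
The plan is to prove the bound by strong induction on $n$, with the induction step reducing instances of size $n$ to instances of size $n-3$ at the cost of two convex sets. The base cases $n \leq 4$ follow from Observation~\ref{obs:prem} (for $n=1,2$) and from the $n=4$ case analysis at the start of the proof of Theorem~\ref{thm:encaps_comp_of_n} (which already yields $t \geq \lfloor(2n+5)/3\rfloor$ in this range).

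Two structural ingredients drive the induction step. First, Observation~\ref{obs:prem} applied at each $p_i$ yields $d_i := |\{j : p_i \in \mathrm{touch}(K_j)\}| \geq 2$, so $\sum_j |T_j| = \sum_i d_i \geq 2n$, where $T_j = \mathrm{touch}_P(K_j)$. Second -- the new ingredient specific to the disjoint setting -- any $K_j$ with $\mathrm{int}(K_j) \cap \mathrm{int}(\mathrm{conv}\,P) = \emptyset$ satisfies $|T_j| \leq 2$: three touched vertices $p_a,p_b,p_c$ would force the non-degenerate triangle $\mathrm{conv}(p_a,p_b,p_c) \subset \overline{K_j}$, whose interior lies in $\mathrm{int}(\mathrm{conv}\,P)$, contradicting $\mathrm{int}(K_j) = \mathrm{int}(\overline{K_j})$ being disjoint from $\mathrm{int}(\mathrm{conv}\,P)$. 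In particular every ``outside'' set touches at most two hull vertices. If \emph{every} $K_j$ satisfies $|T_j| \leq 2$, the double-count gives $2t \geq 2n$, hence $t \geq n \geq \lfloor(2n+5)/3\rfloor$ for $n \geq 5$, finishing this case.

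Otherwise, some $K_j$ has $|T_j| \geq 3$, and I would follow the shortest-span strategy of Case~B in the proof of Theorem~\ref{thm:encaps_comp_of_n}: take $K_a$ of minimum span among sets with $|T_j| \geq 3$, and -- using global pairwise disjointness, which is stronger than the Case~B hypothesis of Theorem~\ref{thm:encaps_comp_of_n} -- argue that $K_a$ touches every hull vertex in its span, so $T_a$ contains three consecutive hull vertices $p_{i-1},p_i,p_{i+1}$. Applying Observation~\ref{obs:prem} at $p_i$ then supplies a second set $K_b$ touching $p_i$; the outside-set bound together with the minimality of $K_a$'s span allow $K_b$ to be selected with $T_b \subseteq \{p_{i-1},p_i,p_{i+1}\}$. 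Removing $\{K_a,K_b\}$ and the triple yields a pairwise disjoint convex family encapsulating the remaining $n-3$ points in convex position, so induction gives $t-2 \geq \lfloor(2(n-3)+5)/3\rfloor$ and hence $t \geq \lfloor(2n+5)/3\rfloor$. The main obstacle is to verify this reduction: one must show that $K_a$ and $K_b$ contribute no angular coverage at any $p_j$ outside the triple, via a careful case analysis of tangent cones at $p_{i\pm 1}$ (to check that the sets touching those points from the opposite side of the triple already suffice for encapsulation), and possibly re-selecting $K_b$ when its initial candidate extends beyond the triple. Both subtleties exploit the combination of global pairwise disjointness and convex position in an essential way.
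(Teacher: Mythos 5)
Your overall architecture matches the paper's: induction with the step $f(n)=2+f(n-3)$, the degree count from Observation~\ref{obs:prem}, the fact that a convex set touching three points of $P$ must meet $\mbox{int}(\mbox{conv}P)$, and the shortest-span argument showing that a minimal big set touches consecutive hull vertices. But the reduction you build on top of this has a genuine gap: you propose to always delete \emph{two} sets ($K_a$ and $K_b$) together with \emph{three} points, and you flag as the ``main obstacle'' the verification that $K_a$ contributes no coverage at points outside the triple. When the shortest span has length $\geq 4$ this verification is not merely delicate --- it is false. A set $K_a$ with $touch(K_a)\supsetneq\{p_{i-1},p_i,p_{i+1}\}$ touches a fourth point $p_m$, hence covers an angular sector at $p_m$; after deleting $K_a$, the point $p_m$ may be left touching only one surviving set, which by Observation~\ref{obs:prem} cannot encapsulate it. This configuration actually occurs (it is exactly the hard subcase of the paper's Case~2, where $touch(K_1)=\{p_1,\dots,p_4\}$, $touch(K_2)=\{p_1,p_2\}$, $touch(K_3)=\{p_3,p_4\}$), so no tangent-cone analysis can rescue the uniform $2$-sets/$3$-points step.

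The paper avoids this by splitting on the span length of the minimal big set. For span $3$ it deletes that set and one neighbor of the middle point (your clean case, handled by the general lemma: if $|touch(K_i)\cup touch(K_j)|\leq 3$ then delete both sets and those three points). For span $\geq 5$ it does \emph{not} delete the big set at all; instead it finds two \emph{small} sets clustered at an interior span point whose combined touch set has size $\leq 3$ and deletes those. For span $4$ it either falls into the previous pattern or performs an asymmetric reduction deleting \emph{three} sets and \emph{four} points, using $f(n)\leq 3+f(n-4)$. To complete your proof you would need this case split (and also the preliminary reduction, which you omit, that every point touches some big set --- needed to justify that the minimal-span set touches all points of its span). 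As written, the induction step fails whenever the minimal span exceeds $3$.
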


%Since $enc_{\circ}(n) \leq cov_{\circ}(n)$, Proposition \ref{cl:tightnesUB} implies $cov_{\circ}(n) \geq \lfloor \frac{2n+5}{3} \rfloor$, hence together with Proposition \ref{cl:g_1UB}, Theorem \ref{thm:comp_of_n} follows.

\begin{proof}[Proof of Proposition \ref{cl:tightnesUB}]
	Let $f(n)=\lfloor \frac{2n+5}{3} \rfloor$. Then $\forall n \geq 3, f(n)=2+f(n-3)$, and the sequence $\{f_n\}_{n=0}^{\infty}$ is $\langle 1,2,3,3,4,5,5,6,7,7,\ldots  \rangle$. The proof is by induction on $n$, where the cases $n=0,1,2,3$ are trivial.
	
	We start with several reductions: first, we can assume that each $K_i$ touches some point in $P$ (otherwise we can simply discard $K_i$), and each $p_j \in P$ touches at least two $K_i$'s (by Observation \ref{obs:prem}). Moreover, if some set $K_i$ touches only one point $p_j \in P$, then $P \setminus \{p_j\}$ is encapsulated by $\mathcal{K} \setminus \{K_i\}$, and by the induction hypothesis $f(n-1) \leq t-1$. Hence $f(n) \leq f(n-1)+1 \leq t$ and we are done. On the other hand, if every set in $\mathcal{K}$ touches just two points, then since every point in $P$ touches at least two $K_i$'s, we have $n \leq t$. Since $\forall n \geq 3 , f(n) \leq n $, we are done again. Hence, from now on we assume that each $K_i$ touches at least two points of $P$, and at least one $K_i$ touches 3 or more points.
	
	For each $K_i \in \mathcal{K}$ let $touch(K_i)=\{p \in P: p \mbox{ touches }K_i\}$. In the arguments below we use the following observation:
	
	\begin{observation}\label{obs:touch<=3}
		In the notations of Proposition \ref{cl:tightnesUB}, if for some $1 \leq i<j \leq n$, $|touch(K_i) \cup touch(K_j)| \leq 3$, then $f(n) \leq t$ and we are done.
	\end{observation}    

\begin{proof}[Proof of Observation \ref{obs:touch<=3}]
	If (w.l.o.g.) $touch(K_i) \cup touch(K_j) \subseteq \{p_1,p_2,p_3\}$ then the $t-2$ sets in $\mathcal{K} \setminus\{K_i,K_j\}$ encapsulate the $n-3$ (or more) points in $P \setminus \{p_1,p_2,p_3\}$, and by the induction hypothesis $f(n-3) \leq t-2$. Therefore $f(n)=2+f(n-3)\leq 2 + (t-2)=t$ and we are done again.
\end{proof} 
 
We say that $K_i \in \mathcal{K}$ is \emph{big} if $|touch(K_i)| \geq 3$. Under the reductions above, if $K_i$ is not big then $|touch(K_i)|=2$ and we say that $K_i$ is \emph{small}.
We can assume that each $p_i \in P$ touches some big set $K_j$. Indeed, otherwise $p_i$ touches at least two small sets $K_{j_1},K_{j_2}$. It follows that $|touch(K_{j_1}) \cup touch(K_{j_2})| \leq 3$, and by Observation \ref{obs:touch<=3} we are done.

Like in the proof of Theorem \ref{thm:encaps_comp_of_n}, define the \emph{span} of a big set $K_i$ to be a shortest arc on the boundary of $\mbox{conv}(P)$ that includes $touch(K_i)$. The \emph{length} of the span is the number of points in $P$ it contains. Assume w.l.o.g. that $K_1$ is a big set with the shortest span $\gamma$. Then $K_1$ touches all points in $P \cap \gamma$, since otherwise some inner $p \in \gamma \cap P$ touches another big $K_j$ whose span is at least as long. But then $K_i \cap K_j \neq \emptyset$, a contradiction.

Since $K_1$ is big, $|P \cap \gamma| \geq 3$. We now consider the cases  $|P \cap \gamma| = 3$,  $|P \cap \gamma| =4$ and  $|P \cap \gamma| \geq 5$, and show that in each case the assertion follows by the induction hypothesis.
For technical reasons, we first consider the simple case $P \cap \gamma =P$. In this case $K_1$ touches $p_1,\ldots, p_n$ and since for every $2 \leq i \leq n$ we have $touch(K_i) \geq 2$ and $K_1 \cap K_i =\emptyset$, it follows that each $K_i$ touches two consecutive vertices of $\mbox{conv}P$. W.l.o.g. $K_2$ touches $p_1,p_2$, but since no two points are encapsulated by fewer than three convex sets, some other set in $\mathcal{K}$, say $K_3$, touches $p_1$ or $p_2$. Then $|touch(K_2) \cup touch(K_3)| \leq 3$, and by Observation \ref{obs:touch<=3} we are done.

Hence, from now on we can assume that $|P \cap \gamma| <|P|$.

\medskip

\noindent \textbf{Case 1: $|P \cap \gamma| = 3$.} 

Assume $P \cap \gamma = \{p_1,p_2,p_3\}$ (see Figure \ref{fig:fig3}), namely, $K_1$ touches exactly the points $p_1,p_2,p_3$ of $P$. The point $p_2 $ touches another convex set, say $K_2$. Since $K_1 \cap K_2 = \emptyset$, the set $touch(K_2)$ is either $\{p_1,p_2\}$ or $\{p_2,p_3\}$. But then we are done by applying Observation \ref{obs:touch<=3} with $K_1,K_2,p_1,p_2,p_3$.
\begin{figure}[ht]
	\begin{center}
		\scalebox{0.5}{
			\includegraphics[width=0.8\textwidth]{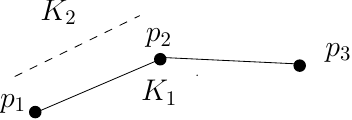}
		}
		\caption{An illustration for Case 1 in the proof of Proposition \ref{cl:tightnesUB} where $touch(K_2)=\{p_1,p_2\}$.}
		\label{fig:fig3}
	\end{center}
\end{figure}

\medskip

\noindent \textbf{Case 2: $|P \cap \gamma| = 4$.} 

Assume $P \cap \gamma = \{p_1,p_2,p_3,p_4\}$ (see Figure \ref{fig:fig4}). Since no point touches just one set in $\mathcal{K}$, the point $p_2$ touches another convex set, say $K_2$. Again, since $K_1 \cap K_2= \emptyset$, the set $touch(K_2)$ is either $\{p_1,p_2 \} $ or $\{p_2,p_3\}$. If $touch(K_2)=\{p_2,p_3\}$ then there exists another set in $\mathcal{K}$, say $K_3$, that touches $p_2$ or $p_3$ (since  two points cannot be encapsulated by fewer than three convex sets), w.l.o.g. $K_3$ touches $p_2$. Since $|touch(K_3)|>1$ and $K_1 \cap K_3= \emptyset$, we have $touch(K_3)=\{p_1,p_2\}$ or $touch(K_3)=\{p_2,p_3\}$ again. But then we are done by Observation \ref{obs:touch<=3} with $K_2,K_3,p_1,p_2,p_3$.

Now we are left with the other possibility, where $touch(K_2)=\{p_1,p_2\}$. The point $p_3$ touches (w.l.o.g.) $K_3$. If $touch(K_3)=\{p_2,p_3\}$ then we are done by Observation \ref{obs:touch<=3}. Therefore we can assume that $touch(K_3)=\{p_3,p_4\}$. Then
$touch(K_1) \cup touch(K_2) \cup touch(K_3) = \{p_1,p_2,p_3,p_4\}$ and by the induction hypothesis, $f(n-4) \leq t-3$ (since the $t-3$ sets in $\mathcal{K} \setminus \{K_1,K_2,K_3\}$ encapsulate the $n-4$ points in $P \setminus \{p_1,p_2,p_3,p_4\}$). Therefore 
\[
f(n) \leq f(n-1)+1 =3 +(f(n-1)-2)=3+f(n-4) \leq 3+(t-3)=t,
\]
as asserted. (Here we used the assumption $n \geq 5$ that holds since $|P \cap \gamma |<|P|$ as discussed above.)
\begin{figure}[ht]
	\begin{center}
		\scalebox{0.6}{
			\includegraphics[width=0.8\textwidth]{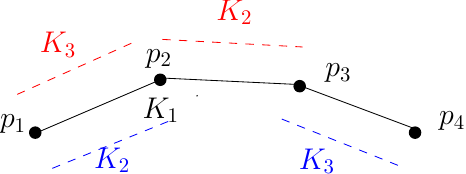}
		}
		\caption{An illustration for case 2 in the proof of Proposition \ref{cl:tightnesUB}. The case $touch(K_2)=\{p_2,p_3\}$ is colored with red, and the case $touch(K_2)=\{p_1,p_2\}$ is colored with green.}
		\label{fig:fig4}
	\end{center}
\end{figure}

\medskip

\noindent \textbf{Case 3: $|P \cap \gamma| \geq 5$.}
Assume $\{p_1,p_2,p_3,p_4,p_5\}\subseteq P \cap \gamma $ (see Figure \ref{fig:fig5}). Since $p_3$ touches more than one set in $\mathcal{K}$, w.l.o.g. $p_3 \in touch(K_2)$. As before, then $touch(K_2)$ is $\{p_2,p_3\}$ or $\{p_3,p_4\}$. W.l.o.g. $touch(K_2)=\{p_3,p_4\}$. Since by Observation \ref{obs:prem} no two points are encapsulated by fewer than three convex sets, we can assume that some other set in $\mathcal{K}$, say $K_3$, touches $p_3$ or $p_4$ or both. Since $K_1 \cap K_3 = \emptyset$ it follows that $|touch(K_3)|=2$ and we have again two convex sets $K_2,K_3$ with $|touch(K_2) \cup touch(K_3)| \leq 3$, and by Observation \ref{obs:touch<=3} we are done. 
\begin{figure}[ht]
	\begin{center}
		\scalebox{0.6}{
			\includegraphics[width=0.8\textwidth]{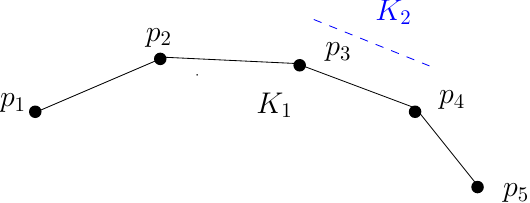}
		}
		\caption{An illustration for case 3 in the proof of Proposition \ref{cl:tightnesUB}.}
		\label{fig:fig5}
	\end{center}
\end{figure}

This completes the proof of Proposition \ref{cl:tightnesUB}.
\end{proof}

\section{Upper Bounds}\label{sec:cov}

In this section we prove the three upper bounds for covering problems depicted in Figure \ref{fig:fig14} -- namely, Theorems~\ref{thm:cover_comp_of_n} and~\ref{thm:cov(n)UB} and Proposition~\ref{cl:g_1UB}. 
%We start with the upper bound in Theorem \ref{thm:cover_comp_of_n}. This upper bound is not only an end in itself but also provides some of the necessary machinery for establishing the subsequent result of Theorem \ref{thm:cov(n)UB} below. 
We begin with the proof of Theorem~\ref{thm:cover_comp_of_n} whose proof also provides some of the necessary machinery for establishing Theorem \ref{thm:cov(n)UB}.

\begin{theorem}\label{thm:cover_comp_of_n}

$cov^c(n) \leq \lfloor \frac{n+5}{2} \rfloor -\delta(n)$ where $\delta(n)$ is as defined in Theorem \ref{thm:encaps_comp_of_n}. Namely,
     $\lfloor \frac{n+5}{2} \rfloor -\delta(n)$ convex sets are sufficient to \emph{cover} the complement of $n$ points in convex position in the plane.
\end{theorem}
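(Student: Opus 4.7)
My plan is to proceed by induction on $n$, exploiting the identity $f(n) = f(n-2) + 1$ that holds for $n \geq 4$ with $n \neq 5$ (which I would verify by a short calculation with $\lfloor(n+5)/2\rfloor - \delta(n)$). The base cases $n \leq 5$ I would handle by explicit constructions; for instance, for $n = 4$ with the unit square $\{(0,0),(1,0),(1,1),(0,1)\}$ the four sets
\[
K_1 = \{y<0\} \cup \{(x,0): 0<x<1\},\quad
K_2 = \{y>1\} \cup \{(x,1): 0<x<1\},
\]
\[
K_3 = \{x<0,\ 0 \leq y \leq 1\}\ \cup\ \{0 \leq x \leq 1,\ 0 < y < 1\},\quad
K_4 = \{x>1,\ 0 \leq y \leq 1\}
\]
form a valid cover of $\mathbb{R}^2 \setminus P$ with $f(4) = 4$ sets. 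Here the key convexity fact, which I would state and prove once at the start, is that an open half-plane together with an open segment lying on its bounding line is convex; this is the building block that will be used throughout the proof (and is also the piece of ``machinery'' that generalizes to the non-convex setting of Theorem \ref{thm:cov(n)UB}).

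For the inductive step, given $P = \{p_1, \ldots, p_n\}$ in convex position cyclically ordered, with $n \geq 6$, I would fix an edge $e = p_1 p_2$ of $\operatorname{conv}(P)$, let $H$ denote the open half-plane beyond $e$ (so $P \setminus \{p_1, p_2\}$ lies in the opposite open half-plane), and include the ``edge slab''
\[
L_e = H \cup (p_1,p_2)
\]
in the cover. By the building-block fact this set is convex and it trivially avoids $P$. After removing $L_e$, the uncovered portion of $\mathbb{R}^2 \setminus P$ is
\[
R = \overline{H^c}\setminus [p_1,p_2]\setminus P',\qquad P' = P\setminus\{p_1,p_2\},
\]
i.e.\ the opposite closed half-plane with the closed segment $[p_1,p_2]$ on its boundary removed and the $n-2$ interior points of $P'$ deleted. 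The goal is then to cover $R$ with $f(n-2) = f(n)-1$ convex sets, using the inductive cover of $\mathbb{R}^2\setminus P'$ as a starting point.

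The main technical step, and the place I expect the real work to be, is adapting the inductive cover $\mathcal{C}'$ of $\mathbb{R}^2\setminus P'$ to a cover of $R$ without increasing its size. Naively intersecting each $K'\in\mathcal{C}'$ with $\overline{H^c}$ yields convex sets avoiding $P'$, but some of them may contain $p_1$ or $p_2$ on the boundary line $\ell(e)$, and moreover the two open rays of $\ell(e)$ beyond $p_1$ and beyond $p_2$ need to be covered. To resolve both issues in one stroke I would strengthen the inductive hypothesis: assert that the cover of $\mathbb{R}^2\setminus P'$ can be chosen so that there exist two distinguished sets which ``approach $\ell(e)$ from the $P'$ side'' in a prescribed way, so that when restricted to $\overline{H^c}$ and extended by the corresponding ray of $\ell(e)\setminus[p_1,p_2]$ they remain convex and avoid $\{p_1,p_2\}$. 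The building-block fact then implies convexity of the extended sets, and the remaining sets of $\mathcal{C}'$ can simply be intersected with the open half-plane $H^c$ (which automatically excludes $p_1,p_2$, since they lie on $\ell(e)$).

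The main obstacle I foresee is verifying this strengthened inductive hypothesis: one must show that for every $n-2$ points in convex position and every choice of supporting line $\ell$ defined by an edge of the removed pair, a cover of size $f(n-2)$ exists with two sets ``pointing toward'' the corresponding two rays of $\ell$. Establishing this likely requires carrying additional structural information through the induction (for example, a specific choice of which two sets are the ``ray-absorbers'' in each step, tracked in terms of an edge of $\operatorname{conv}(P')$). I would expect to handle the exceptional case $n=5$ separately together with the base cases, and to use the same line-of-argument (with a slightly larger constant, of $3$ instead of $2$ additional sets per pair of points) to obtain the bound for odd $n$ via an odd base.
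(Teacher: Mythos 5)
Your proposal has the right quantitative skeleton (one new convex set per two removed points, with half-open sets of the form ``open half-plane plus a piece of its boundary line'' as the building block), and your explicit base case for $n=4$ checks out. However, the inductive step contains a genuine gap, and you have in fact located it yourself: everything hinges on the ``strengthened inductive hypothesis'' that a cover of $\mathbb{R}^2\setminus P'$ of size $f(n-2)$ exists containing two distinguished sets that (i) avoid $p_1,p_2$ after restriction to the closed half-plane and (ii) have the two open rays of $\ell(e)\setminus[p_1,p_2]$ in their closures, so that the rays can be absorbed while preserving convexity. You never formulate this hypothesis precisely (it must be a statement about an arbitrary line disjoint from $\operatorname{conv}(P')$, since $\ell(e)$ is not an edge of $\operatorname{conv}(P')$), and you never prove that it propagates through the induction. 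Note that no single set can absorb both rays (a convex set containing both would contain $[p_1,p_2]\ni p_1$), so you genuinely need two correctly positioned sets, and the naive cover obtained by intersecting $\mathcal{C}'$ with the open half-plane leaves the entire line $\ell(e)\setminus[p_1,p_2]$ uncovered. Since this is precisely where all the work of the proof lives, the argument as written is incomplete. (The remark at the end about ``3 instead of 2 additional sets per pair of points'' for odd $n$ is also confused: $f(n)=f(n-2)+1$ holds for all $n\ge 6$, so only the base case $n=5$ is exceptional.)

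The paper sidesteps this difficulty entirely by giving a single explicit global construction rather than an induction: the points are paired up as $a_i,b_i$ across the polygon, and the cover consists of the ``wedges'' $H_0^+\cap\cdots\cap H_{i-1}^+\cap H_i^-$, where $H_i^\pm$ are half-open half-planes bounded by $\ell(a_i,b_i)$, each carrying exactly one of the two open rays of that line beyond $[a_i,b_i]$; the open segments $(a_i,b_i)$ are all swept up by one final set, $\operatorname{int}(\operatorname{conv} P)$. This global bookkeeping of which set owns which ray is exactly the information your strengthened inductive hypothesis would have to track, so the cleanest repair of your argument is to abandon the induction and write the cover down explicitly in this fashion.
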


\begin{proof}
%To complete the proof of Theorem \ref{thm:encaps_comp_of_n} 
%We show that $f(n)=\lfloor \frac{n+5}{2} \rfloor -\delta(n)$ convex sets suffices to cover the complement of $n$ points in convex position in the plane.
Denote $f(n)=\lfloor \frac{n+5}{2} \rfloor -\delta(n)$. Consider a set $P$ of $n \geq 4 $ points in convex position in $\Re^2$, ordered cyclically $b_0,\ldots,b_{\lfloor \frac{n}{2} \rfloor-1},a_{\lceil \frac{n}{2} \rceil-1},\ldots,a_0$ as in Figure \ref{fig:fig10}(b,c). For each $0 \leq i \leq \lfloor \frac{n}{2} \rfloor-1$, let $H_i^-$ be the half-open half-plane below $\ell(a_i,b_i)$ including the open ray on $\ell(a_i,b_i)$ emanating from $b_i$ to the right, and let $H_i^+$ be the half-open half-plane above $\ell(a_i,b_i)$ including the open ray on $\ell(a_i,b_i)$ emanating from $a_i$ to the left, as in Figure \ref{fig:fig10}(a).
 \begin{figure}[ht]
	\begin{center}
		\scalebox{0.8}{
			\includegraphics[width=0.8\textwidth]{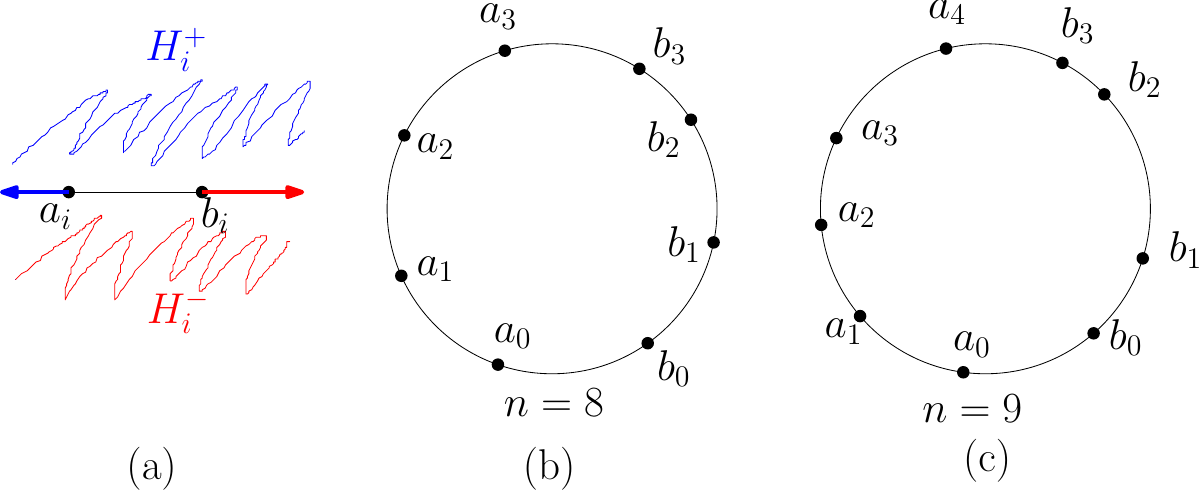}
		}
		\caption{An illustration for the proof of Theorem \ref{thm:cover_comp_of_n}.}
		\label{fig:fig10}
	\end{center}
\end{figure}

We construct $\lfloor \frac{n+5}{2} \rfloor$ convex sets: 
\begin{align*}
	&H_0^- \\
	&H_0^+ \cap H_1^- \\
	&H_0^+ \cap H_1^+ \cap H_2^-\\
	&\ldots \\
	&H_0^+ \cap \ldots \cap H_{\lfloor \frac{n}{2} \rfloor-2}^+ \cap H_{\lfloor \frac{n}{2} \rfloor-1}^-.
\end{align*}
If $n$ is even then the last two convex sets are $H_{\lfloor \frac{n}{2} \rfloor-1}^+$ and $\mbox{int} (\mbox{conv}(\{a_i\}\cup \{b_i\}))$. If $n$ is odd, then the three last convex sets are 
\begin{align*}
	&H_0^+ \cap \ldots \cap H_{\lfloor \frac{n}{2} \rfloor-1}^+ \cap H_{\lfloor \frac{n}{2} \rfloor}^- \\
	&  H_{\lfloor \frac{n}{2} \rfloor}^+ \\
	&\mbox{int} (\mbox{conv}(\{a_i\}\cup \{b_i\})),
\end{align*}
where $ H_{\lfloor \frac{n}{2} \rfloor}^-$ is the half-open half-plane below $a_{\lfloor \frac{n}{2} \rfloor}$ including the left open ray, and $ H_{\lfloor \frac{n}{2} \rfloor}^+$ is the half-open half-plane above $a_{\lfloor \frac{n}{2} \rfloor}$ including the right open ray. It is clear that in both cases, the sets we constructed cover $\mathbb{R}^2 \setminus P$.
\end{proof}

\medskip

\begin{theorem}\label{thm:cov(n)UB}
The complement of any set of $n$ points in general position in the plane can be covered by $ \frac{7n}{11}+4$ convex sets, namely, 
    \[
    cov(n) \leq \frac{7n}{11}+4.
    \]
\end{theorem}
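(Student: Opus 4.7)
The plan is to prove this upper bound by induction on $n$. For small $n$ (roughly $n \leq 10$), the bound $cov(n) \leq 7n/11 + 4$ is verified directly --- a cover of $\Re^2 \setminus P$ by $n+1$ vertical half-open slabs between consecutive $x$-coordinates of $P$ (after rotating so the $x$-coordinates are distinct) already satisfies the bound for small $n$, since the additive constant $+4$ gives comfortable slack.

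For the inductive step with $n \geq 11$, my goal is to identify an $11$-element subset $P_0 \subseteq P$ that can be ``peeled off'' using $7$ convex sets, reducing to the problem on $P \setminus P_0$. Concretely, I would first find a line $\ell$ through two points $p, q \in P$ such that the closed half-plane $\overline{H_L}$ on one side of $\ell$ contains exactly these $11$ points (the pair $p, q$ on $\ell$ together with $9$ further points of $P$ lying strictly in $H_L$), while the open opposite half-plane $H_R$ contains the remaining $n-11$ points. Existence of such an $\ell$ for $n \geq 11$ follows from a rotating-line argument: for any fixed $p \in P$, as a line through $p$ rotates, the count of points of $P \setminus \{p\}$ on a fixed side passes through every value in $\{0, 1, \ldots, n-2\}$, so some rotation passes through a second point $q \in P$ yielding exactly $9$ points on one side. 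Setting $P_0 = \{p,q\} \cup (P \cap H_L)$ and assuming the Key Lemma below (that $\overline{H_L} \setminus P_0$ can be covered by $7$ convex subsets of $\overline{H_L}$), I combine those $7$ sets with the induction hypothesis on $P \setminus P_0$: the latter yields a cover of $\Re^2 \setminus (P \setminus P_0)$ by at most $7(n-11)/11 + 4$ convex sets, each of which I intersect with $H_R$ (open, hence disjoint from $P_0 \subset \overline{H_L}$) to obtain valid convex subsets of $\Re^2 \setminus P$ covering $H_R \setminus P$. Together with the $7$ sets covering $\overline{H_L} \setminus P$, this gives a cover of $\Re^2 \setminus P$ by $7 + 7(n-11)/11 + 4 = 7n/11 + 4$ convex sets, completing the induction.

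The main obstacle is the Key Lemma: covering $\overline{H_L} \setminus P_0$ by just $7$ convex sets. Since Theorem~\ref{thm:encaps_comp_of_n} shows that encapsulating $11$ points in convex position in $\Re^2$ requires at least $8$ convex sets, the proof must crucially exploit that two points of $P_0$ lie on the boundary $\ell$ of $\overline{H_L}$: the half-neighborhoods of $p$ and $q$ inside $\overline{H_L}$ require only a one-sided cover, which in principle saves one set. My plan is to adapt the strip-based construction of Theorem~\ref{thm:cover_comp_of_n} to the half-plane setting by taking $\ell = \ell(p,q)$ itself as the first pair-line; the analog of the ``far'' half-plane set of Theorem~\ref{thm:cover_comp_of_n} then lies entirely in $H_R$ and can be omitted, reducing the count from $8$ down to $7$. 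Care is needed to ensure that every part of $\overline{H_L} \setminus P_0$ remains covered after this omission (in particular the ray of $\ell$ emanating outward from $q$, which was previously contained in the dropped set, must be absorbed into a neighboring strip) and that the construction still works when $P_0$ is not in convex position --- in which case additional lines through interior points of $P_0$'s hull must be introduced to subdivide strips appropriately. Verifying the $7$-set bound uniformly across all combinatorial types of $P_0$ is the principal technical challenge.
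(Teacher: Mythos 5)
Your induction scheme is arithmetically sound ($7+\tfrac{7(n-11)}{11}+4=\tfrac{7n}{11}+4$), the rotating-line argument for finding $\ell(p,q)$ with exactly $9$ points of $P$ strictly on one side is standard and correct, and intersecting the inductively obtained sets with the open half-plane $H_R$ is fine. (A small quibble: the slab bound $n+1\le \tfrac{7n}{11}+4$ fails for $n=9,10$; you would need, e.g., $cov(n)\le cov_{\circ}(n)\le\lfloor\tfrac{2n+5}{3}\rfloor$ for those base cases.) But the entire content of the theorem has been pushed into your Key Lemma, which you do not prove, and your sketch of it does not survive scrutiny. First, the intersection of $\ell$ with $\overline{H_L}\setminus\{p,q\}$ consists of three pieces (the two outer rays and the open segment $(p,q)$), and a convex set avoiding $p$ and $q$ can meet at most one of them; so three distinct sets are already committed to $\ell$, and the ray beyond $q$ cannot simply be ``absorbed into a neighboring strip'' that already contains the ray beyond $p$. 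Second, and more fundamentally, the $9$ interior points are an arbitrary general-position configuration over which you have no control. If they happen to lie in convex position, Theorem~\ref{thm:encaps_comp_of_n} forces at least $\lfloor\tfrac{9+5}{2}\rfloor=7$ convex sets just to encapsulate them, so all seven of your sets are already fully accounted for by those $9$ points, with zero slack left to also touch $p$ and $q$ twice each and to cover the three pieces of $\ell$. The lemma is therefore at best razor-thin and quite possibly false; in any case it is the whole theorem and is missing.

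The paper avoids this trap by not peeling an arbitrary $11$-point subset. It first disposes of the case where some triangle of three consecutive hull vertices contains a point of $P$ (removing $4$ points at the cost of $2$ extra sets). In the remaining case it takes an edge $[p,q]$ of the convex hull of the interior points and partitions the plane into four convex regions $A,B,C,D$ so that two wedges $B$ and $C$ contain \emph{no} points of $P$ but have the six points $p,q,p'',q'',p''',q'''$ on their boundaries (cost: $2$ sets for $6$ points), the region $A$ contains $k$ \emph{consecutive hull vertices in convex position} (cost: $\lfloor\tfrac{k+5}{2}\rfloor-\delta(k)$ sets via Theorem~\ref{thm:cover_comp_of_n}), and only $D$, with at most $n-6-k$ points, is handled by induction. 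The ratio $7{:}11$ then falls out of optimizing $\bigl(\lfloor\tfrac{k+5}{2}\rfloor-\delta(k)+2\bigr)/(k+6)$ over $k$, with the worst case at $k=5$. In other words, the paper engineers \emph{which} points get peeled so that six of them cost essentially nothing and the rest are provably in convex position; that structural choice is precisely what your proposal lacks, and without it the ``$7$ sets per $11$ points'' target has no justification.
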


\begin{proof}
    Let $P$ be a set of $n$ points in general position in the plane. If $P$ is in convex position, then by Theorem \ref{thm:cover_comp_of_n}, $cov(P) \leq \lfloor \frac{n+5}{2} \rfloor -\delta(n) \leq \frac{7n}{11}+4$. If $P$ contains $n-1$ points in convex position and a single point $p$ in the interior of their convex hull, then by the proof of Theorem \ref{thm:cover_comp_of_n}, $\Re^2 \setminus (P \setminus \{p\})$ can be covered by $\lfloor \frac{n+5}{2} \rfloor -\delta(n)$ convex sets, such that each point in $\mbox{int}(\mbox{conv}(P\setminus \{p\}))$ is covered twice. Then, by splitting each of the two convex sets that contain $p$ into 2 convex sets, we obtain a cover of $\Re^2 \setminus P$ with $\lfloor \frac{n+5}{2} \rfloor -\delta(n) +2 \leq \frac{7n}{11}+4$ convex sets. From now on we assume that $|P \cap (\mbox{int}(\mbox{conv}P))| \geq 2$. 

    We proceed by induction, where the induction basis is the two `degenerate' settings above. Consider two cases. The simpler one is where there exist three consecutive vertices $a,b,c$ of the boundary of $\mbox{conv}P$, such that some point of $P$ lies inside the triangle $\triangle abc$. Assume w.l.o.g. that $b$ is the highest point of $\mbox{conv}P$, and that the line $\ell(a,c)$ is horizontal (see Figure \ref{fig:fig11}). 
    \begin{figure}[ht]
	\begin{center}
		\scalebox{0.9}{
			\includegraphics[width=0.8\textwidth]{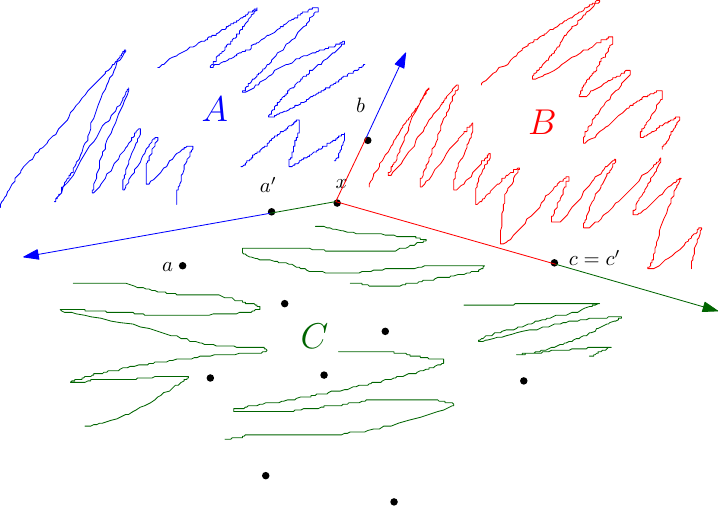}
		}
		\caption{An illustration for the proof of Theorem \ref{thm:cov(n)UB}.}
		\label{fig:fig11}
	\end{center}
\end{figure}

Let $x$ be the highest point in $P \cap \mbox{int}(\triangle abc)$ (if there is more than one highest point, $x$ will be the leftmost one). Let $a',c' \in P$ be two points such that $a',x,c'$ are consecutive vertices of the boundary of $\mbox{conv}(P \setminus \{b\})$. Note that $a'$ can be either $a$ or some higher point in $int (\mbox{conv} (P \setminus \{b\}))$, and similarly for $c'$. In Figure \ref{fig:fig12}, $a' \in \mbox{int}(\mbox{conv} (P \setminus \{b\}))$ and $c'=c$. Note also that $P \cap (\mbox{conv}P \setminus \mbox{conv} (P \setminus \{b\}))=\{b\}$. The rays $\vec{xb}, \vec{xc'},\vec{xa'}$ partition the plane into three convex sets $A,B,C$, leaving $x,a',b$ and $c'$ uncovered, as illustrated in Figure \ref{fig:fig12}. 
   \begin{figure}[ht]
	\begin{center}
		\scalebox{0.9}{
			\includegraphics[width=0.8\textwidth]{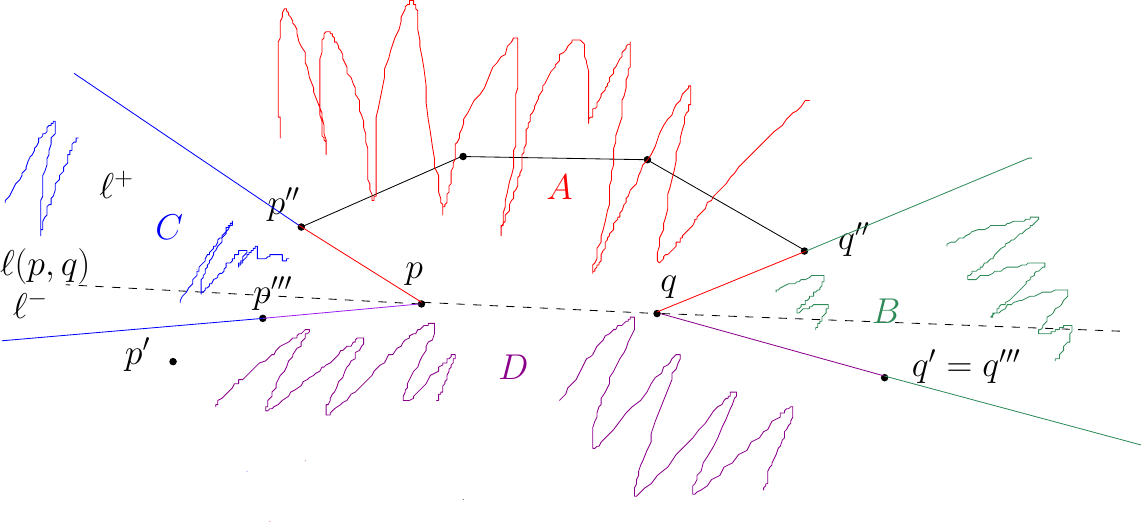}
		}
		\caption{An illustration for the proof of Theorem \ref{thm:cov(n)UB}. The rays $\vec{xb}, \vec{xc'},\vec{xa'}$ except for the points $a',b',c',x$ belong to the corresponding set $A,B,C$ or $D$ as can be seen by the colors.}
		\label{fig:fig12}
	\end{center}
\end{figure}

Since $|C \cap P|=n-4$, by the induction hypothesis $\Re^2 \setminus(P \setminus \{x,a',b',c'\})$ can be covered by $\leq \frac{7(n-4)}{11}+4$ convex sets. Intersecting each of these convex sets with $C$, and adding $A$ and $B$, we obtain a cover of $\Re^2 \setminus P$ by $\leq \frac{7(n-4)}{11}+6 \leq \frac{7n}{11}+4 $ convex sets and we are done.

The remaining case is where each three consecutive vertices of $\mbox{conv}P$ form a triangle whose interior contains no point of $P$. Let $P'= \mbox{int}(\mbox{conv}P)$. By the assumption above $|P'|\geq 2$. Let $[p,q]$ ($p,q \in P'$) be a boundary edge of $\mbox{conv}(P')$. The line $\ell(p,q)$ intersects two non consecutive\footnote{These two edges are indeed non consecutive, since otherwise we have again the first case of a non empty triangle.} boundary edges $[p',p''],[q',q'']$ of $\mbox{conv}P$. (W.l.o.g., all vertices of $P'$ lie below $\ell(p,q)$ as in Figure \ref{fig:fig12}).

Let $\ell^+$ be the closed half-plane above $\ell(p,q)$, and $\ell^-$ be the closed half-plane below $\ell(p,q)$. Let $q''',p''' \in P$ be points such that $p''',p,q,q'''$ are four consecutive vertices of $\mbox{conv} (P \cap \ell^-)$. (The point $p'''$ can be either $p'$ or some inner point of $\mbox{conv}P$ -- the latter is demonstrated in Figure \ref{fig:fig12}, and $q'''$ can be either $q'$ or some inner point of $\mbox{conv}P$ -- the former is demonstrated in Figure \ref{fig:fig12}).

Note that $\angle p''pq+\angle pqq''\geq 180^{\circ}$ or $\angle p'''pq+\angle pqq'''\geq 180^{\circ}$. Let us partition the set $\Re^2 \setminus\{p,q,p'',q'',p''',q'''\}$ into four convex sets $A,B,C,D$ as follows. If $\angle p''pq+\angle pqq''>180^{\circ}$ and $\angle p'''pq+\angle pqq'''>180^{\circ}$ (as in Figure \ref{fig:fig12}) then $A$ is bounded by $\vec{pp''},[p,q]$ and $\vec{qq''}$, $C$ is bounded by $\vec{pp''},\vec{pp'''}$, $B$ is bounded by $\vec{qq''},\vec{qq'''}$, and $D$ is bounded by $\vec{pp'''},[p,q]$ and $\vec{qq'''}$.

If $\angle p'''pq+\angle pqq'''<180^{\circ}$ then the sets $A,B,C,D$ are defined similarly, but $D$ is bounded, as illustrated in Figure \ref{fig:fig13}. Symmetrically, if $\angle p''pq+\angle pqq''<180^{\circ}$ then $A$ is bounded. Anyway, $\mbox{cl}(A) \cap P$ contains only consecutive vertices of the boundary of $\mbox{conv}P$ from $p''$ to $q''$. Moreover, $\mbox{cl}(C) \cap P = \{p,p'',p'''\},\mbox{cl}(B) \cap P = \{q,q'',q'''\}$, and only $D$ contains points of $\mbox{int}(\mbox{conv}P)$. Hence $|D \cap P| \leq n-6$.
   \begin{figure}[ht]
	\begin{center}
		\scalebox{0.9}{
			\includegraphics[width=0.8\textwidth]{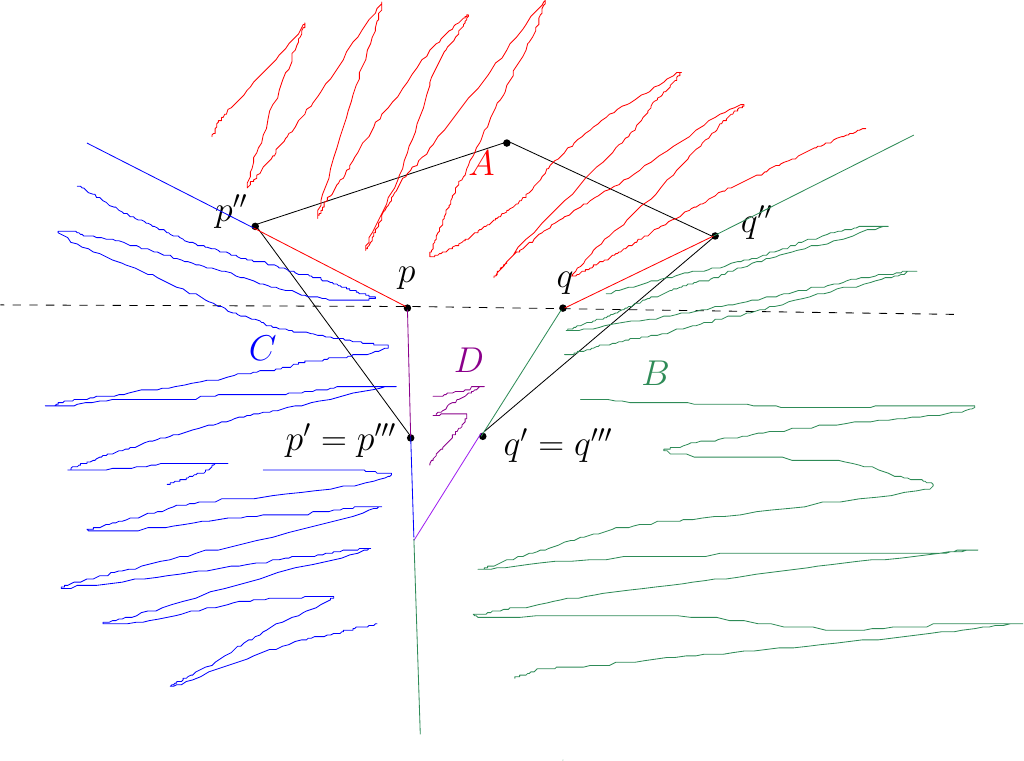}
		}
		\caption{An illustration for the proof of Theorem \ref{thm:cov(n)UB} where $\angle p'''pq+\angle pqq'''<180^{\circ}$.}
		\label{fig:fig13}
	\end{center}
\end{figure}

The remaining part of the proof makes use of the induction hypothesis on $P \cap D$. We intersect each of the convex sets obtained from the induction hypothesis with $D$. This procedure guarantees obtaining a family $\D$ of convex sets.

If $A \cap P = \emptyset$ then $\D \cup \{A,B,C\}$ is a cover of $\Re^2 \setminus P$ by at most $\frac{7(n-6)}{11}+4+3$ convex sets, and since $\frac{7(n-6)}{11}+7< \frac{7n}{11}+4$ we are done.  
If $|A \cap P| =k$ then by Theorem \ref{thm:cover_comp_of_n} (that can be applied here since $A \cap P$ is in convex position), $\Re^2 \setminus (A \cap P)$ can be covered by $\lfloor \frac{k+5}{2} \rfloor - \delta(k)$ convex sets. Let $\A$ be the family of the intersections of these convex sets with $A$. Then $\A \cup \D \cup \{B,C\}$ is a family of at most 
\[
\left(\left\lfloor \frac{k+5}{2} \right\rfloor - \delta(k)\right)+\left(\frac{7(n-6-k)}{11}+4 \right) +2\leq \frac{7n}{11}+4
\]
convex sets that cover $\Re^2 \setminus P$ as needed. (The ratio 7:11 is obtained when $k=5$, for any other value of $k$ the right inequality is strong.)
This completes the proof of Theorem \ref{thm:cov(n)UB}.
\end{proof}

\medskip

\begin{proposition}\label{cl:g_1UB}
\[
     cov_{\circ}(n) \leq \left\lfloor \frac{2n+5}{3} \right\rfloor.
\]
     Namely, the complement of $n$ points in general position in the plane can be covered by $\lfloor \frac{2n+5}{3} \rfloor$ pairwise disjoint convex sets.
\end{proposition}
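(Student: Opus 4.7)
Proof plan.

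The plan is to prove $cov_{\circ}(n) \leq f(n) := \lfloor(2n+5)/3\rfloor$ by induction on $n$, exploiting the recurrence $f(n) = f(n-3) + 2$ for all $n \geq 3$, so that each inductive step should peel three points of $P$ from the cover by introducing exactly two new pairwise disjoint convex sets. The base cases $n \leq 3$ require $1,2,3,3$ disjoint convex sets respectively and are handled by explicit constructions that glue half-planes or wedges along their common boundaries with the half-open conventions used in Observation~\ref{obs:prem} and in the $H_i^{\pm}$ sets of Theorem~\ref{thm:cover_comp_of_n}.

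For the inductive step with $n \geq 4$, I would pick three consecutive convex-hull vertices $p_1, p_2, p_3$ of $P$ together with the chord $\ell := \ell(p_1, p_3)$. In the generic situation where the "cap" triangle $\triangle p_1 p_2 p_3$ is empty of points of $P'$ (automatic in convex position, and handled by a separate case otherwise), $p_2$ lies in one open half-plane $\ell^+$ of $\ell$ and $P' := P \setminus \{p_1, p_2, p_3\}$ lies in the other open half-plane $\ell^-$. Apply the inductive hypothesis to $P'$ to obtain $f(n-3)$ disjoint convex sets covering $\mathbb{R}^2 \setminus P'$ and intersect each with the closed half-plane $\ell^-$; since $P' \cap \ell = \emptyset$, the intersections remain disjoint, convex, and cover $\ell^- \setminus P$.

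It remains to cover the closed region $\ell^+ \cup (\ell \setminus \{p_1, p_3\})$ minus $\{p_2\}$ by exactly two disjoint convex sets $A, B$. I would choose a line $m$ through $p_2$ that meets $\ell$ at an interior point $w$ of the segment $(p_1, p_3)$ and separates $p_1$ from $p_3$. Let $A$ and $B$ be the closed quadrants $\ell^+ \cap m^-$ and $\ell^+ \cap m^+$, and distribute their common boundary pieces by the half-open scheme of Theorem~\ref{thm:cover_comp_of_n}: the open segment $(w, p_2)$ along $m$ is assigned to $A$, the open ray beyond $p_2$ along $m$ to $B$, the open segment $(w, p_1)$ along $\ell$ to $A$, and the open segment $(w, p_3)$ along $\ell$ to $B$. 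This produces two disjoint convex sets that together with the inductive cover give $f(n-3) + 2 = f(n)$ disjoint convex sets covering $\mathbb{R}^2 \setminus P$.

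The main obstacle is the boundary bookkeeping around $p_1$ and $p_3$. A closed half-plane minus three non-collinear interior points cannot be covered by only two disjoint convex sets, so the argument crucially relies on the fact that two of the three peeled points, namely $p_1$ and $p_3$, lie on the separating line $\ell$ itself rather than strictly inside $\ell^+$; this leaves $p_2$ as the only genuine interior exclusion of $\ell^+$, which is handled by the splitting line $m$ through $p_2$. The two unbounded rays of $\ell$ beyond $p_1$ and beyond $p_3$ then need to be absorbed either into the restricted inductive cover (using that one inductive set naturally touches $\ell$ on each side) or into $A, B$ via further half-open refinements; either way, the same $H_i^{\pm}$ style assignment that works in the proof of Theorem~\ref{thm:cover_comp_of_n} is the template for making these distributions compatible with convexity and disjointness, and is the heart of the technical work.
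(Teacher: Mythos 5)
Your high-level strategy (induction via $f(n)=f(n-3)+2$, peeling three consecutive hull vertices and paying two new disjoint convex sets) is exactly the paper's, but your choice of cutting line creates a gap that the paper's construction is specifically designed to avoid. You cut along the chord $\ell(p_1,p_3)$ and need $P'$ to lie entirely in the open half-plane $\ell^-$, i.e.\ you need the cap triangle $\triangle p_1p_2p_3$ to contain no other point of $P$. This is not a degenerate exception you can defer: when $\mbox{conv}(P)$ is a triangle (the extreme but entirely legal case of $n-3$ interior points), \emph{every} choice of three consecutive hull vertices has the whole point set inside its cap, so your ``generic situation'' never occurs and the unspecified ``separate case'' is in fact the whole problem. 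The paper sidesteps this by cutting along the two hull \emph{edges} $\ell(x,y)$ and $\ell(y,z)$ rather than the chord: the two new sets $A,B$ then live entirely outside $\mbox{int}(\mbox{conv}P)$, so no emptiness hypothesis is needed, and the recursion region $C=\Re^2\setminus(A\cup B\cup\{x,y,z\})$ is a convex wedge with apex at $y$ that contains all of $P\setminus\{x,y,z\}$ automatically (general position puts no other point of $P$ on either hull edge).

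The second problem is your boundary bookkeeping on $\ell$, which you correctly identify as the heart of the matter but do not resolve, and as written it fails in two ways. Intersecting the inductive cover with the \emph{closed} half-plane $\ell^-$ leaves the points $p_1,p_3\in\ell$ covered by the restricted inductive sets (they lie in $\Re^2\setminus P'$, hence in some inductive set), so the resulting family is not a cover of $\Re^2\setminus P$; and assigning the open segments $(w,p_1)$, $(w,p_3)$ of $\ell$ to $A$ and $B$ makes $A,B$ overlap the restricted inductive sets on $\ell$, violating pairwise disjointness. The repair is to recurse into the half-open region consisting of the open half-plane $\ell^-$ together with only the open segment $(p_1,p_3)$, and to hand the two outer open rays of $\ell$ (beyond $p_1$ and beyond $p_3$) to $A$ and $B$ respectively --- but even with this repair you are still stuck with the cap-emptiness issue above. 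In the paper's wedge construction both difficulties disappear at once, because $x,y,z$ are excluded from $C$ by construction and the only boundary pieces $C$ keeps are the open segments $(x,y)$ and $(y,z)$.
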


\begin{proof}[Proof of Proposition \ref{cl:g_1UB}]
We prove the claim by induction on $n$. The inequality is trivial for $n=1$. For $n=2$, a cover of the complement of 2 points in the plane is illustrated in Figure \ref{fig:fig1}$(a)$. The case $n=3$ is illustrated in Figure \ref{fig:fig1}$(b)$.

\begin{figure}[ht]
	\begin{center}
		\scalebox{0.9}{
			\includegraphics[width=0.8\textwidth]{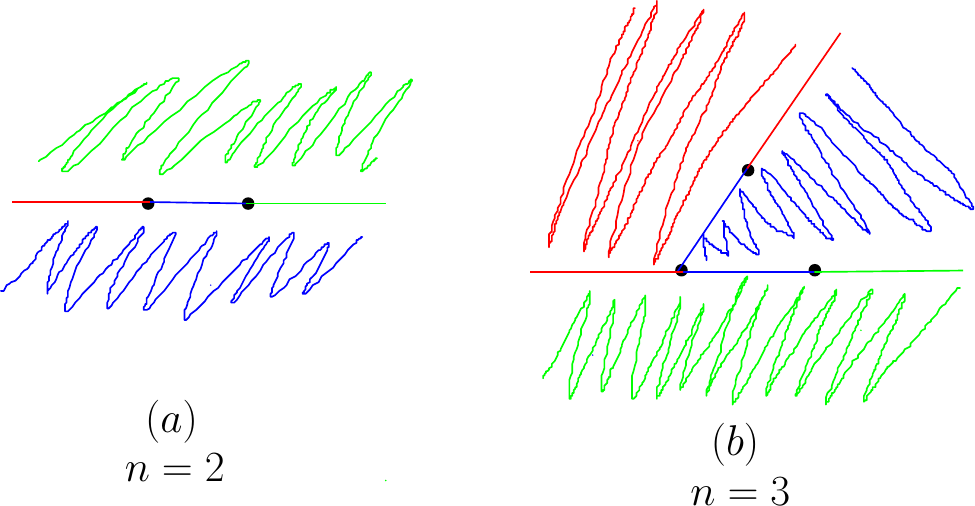}
		}
		\caption{A cover of the complement of $n=2$ and $n=3$ points in $\Re^2 $ with convex sets. Each convex set is colored by a different color.}
		\label{fig:fig1}
	\end{center}
\end{figure}

In the induction step, we shall prove that for $n \geq 3$, $cov_{\circ}(n) \leq 2 +cov_{\circ}(n-3)$, and the assertion will follow. Indeed, given a set $P$ of $n$ points in general position in the plane, let $x,y,z \in P$ be 3 consecutive vertices of $\mbox{conv}P$. Let $A,B$ be convex sets in the complement of $\mbox{int(conv}P)$ as illustrated in Figure \ref{fig:fig2}, and let $C=\Re^2 \setminus(A \cup B \cup \{x,y,z\})$. By the induction hypothesis $\Re^2 \setminus(P \setminus \{x,y,z\})$ can be covered by $cov_{\circ}(n-3)$ convex sets $K_1,\ldots,K_{cov_{\circ}(n-3)}$. Then $A,B,K_1 \cap C,\ldots,K_{cov_{\circ}(n-3)} \cap C$ are $2+cov_{\circ}(n-3)$ convex sets whose union equals $\Re^2 \setminus P$, as asserted.
\end{proof}
\begin{figure}[ht]
	\begin{center}
		\scalebox{0.6}{
			\includegraphics[width=0.8\textwidth]{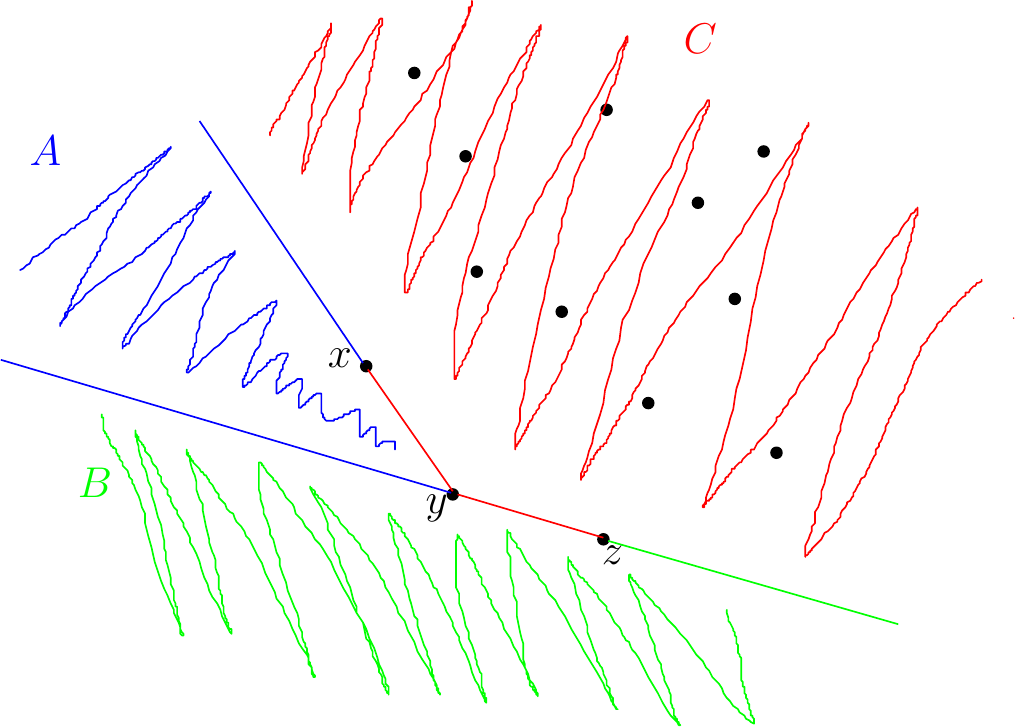}
		}
		\caption{An illustration for the proof of Proposition \ref{cl:g_1UB}. Each convex set is colored with a different color.}
		\label{fig:fig2}
	\end{center}
\end{figure}

	\bibliographystyle{plain}
	\bibliography{references}

\end{document}